\newtheorem{theorem}{Theorem}[section]
\newtheorem{proposition}[theorem]{Proposition}
\newtheorem{corollary}[theorem]{Corollary}
\newtheorem{example}[theorem]{Example}
\newtheorem{examples}[theorem]{Examples}
\newtheorem{lemma}[theorem]{Lemma}
\newtheorem{final remark}[theorem]{Final Remark}
\newtheorem{definition}[theorem]{Definition}
\begin{document}

\title{Maximal ideals of generalized summing linear operators}
\author{Geraldo Botelho\thanks{Supported by CNPq Grant 304262/2018-8 and Fapemig Grant PPM-00450-17.}\,, Jamilson R. Campos and  Lucas Nascimento\thanks{Supported by a CAPES scholarship. \newline 2020 Mathematics Subject Classification: 47L20, 46J20, 46B45, 47B37, 46B10.\newline Keywords: Tensor norms, operator ideals, summing operators, sequence spaces.}}
\date{}
\maketitle

\begin{abstract} We prove when a Banach ideal of linear operators defined, or characterized, by the transformation of vector-valued sequences is maximal. Known results are recovered as particular cases and new information is obtained. To accomplish this task we study a tensor quasi-norm determined by the underlying sequence classes. The duality theory for these tensor quasi-norms is also developed.
\end{abstract}

\section{Introduction}
The theory of operator ideals is central in modern mathematical analysis (see \cite{handbook}, \cite[6.3]{history}) and, in this context, maximal ideals play a key role. For recent developments on maximal operator ideals, see, e.g, \cite{samuel, kim, J. A. Lopez Molina.tres, turcovillafane}. A number of important operator ideals are defined, or characterized, by the transformation of vector-valued sequences; and some of these ideals are known to be maximal. A unifying approach to this kind of operators ideals was proposed in \cite{G. Botelho} using the concept of {\it sequence classes}. For sequence classes $X$ and $Y$, a linear operator $T \colon E \longrightarrow F$ is $(X;Y)$-summing if $T((x_j))_{j=1}^\infty \in Y(F)$ whenever $(x_j)_{j=1}^\infty \in X(E)$. The Banach operator ideal of such operators is denoted by ${\cal L}_{X;Y}$.  This approach has proved to be quite fruitful, see \cite{achour2018, complut, Jamilson.dual, espaco.mid, G. Botelho and D. Freitas, raquel, J.R.Campos.J.Santos, J. Ribeiro and F. Santos, J. Ribeiro and F. Santos.dois}. The purpose of this paper is to study the maximality of these Banach operator ideals. Generalizing some well known cases, we find conditions on the sequences classes $X$ and $Y$ so that the Banach operator ideal ${\cal L}_{X;Y}$ is maximal.

Following the long tradition of the interplay between operator ideals and tensor norms, which comes from Grothendieck's seminal works and stands to the day (see recent developments in \cite{achour2018, D.Achour, sheldon, maite, maite2020, kim2020,  J. A. Lopez Molina, J. A. Lopez Molina.tres, lopezmolina2019, miguel}), we prove our main results defining, developing and applying a tensor quasi-norm $\alpha_{X;Y}$ determined by the sequence classes $X$ and $Y$.  The tensor quasi-norms $\alpha_{X;Y}$ can be regarded as generalizations of the classical Chevet-Saphar tensor norms (see \cite{A.Defant, R. Ryan}).

In Section 2 we define a tensor quasi-norm associated to the underlying sequences classes and apply it to give conditions so that the corresponding ideal of summing operators is maximal. Known results are recovered as particular instances and new concrete information is obtained. The duality theory of the tensor quasi-norm $\alpha_{X;Y}$ associated to the sequence classes $X$ and $Y$ is developed in Section 3. For Banach spaces $E$ and $F$ we describe the continuous linear functionals on $E \otimes_{\alpha_{X;Y}} F$ as linear operators from $E$ to $F^*$ and as continuous bilinear forms on $E \times F$. As a byproduct we show when the tensor quasi-norm $\alpha_{X;Y}$ satisfies a condition that is equivalent to maximality in the case of operator ideals associated to finitely generated tensor norms.

For operator ideals we refer to \cite{A.Defant, A.Pietsch}, for the interplay between tensor norms and operator ideals to \cite{A.Defant, R. Ryan}, for the theory of absolutely summing operators to \cite{J.Diestel} and for quasi-norms and quasi-normed spaces to \cite{N. J. Kalton 2}.

Banach spaces over $\mathbb{K} = \mathbb{R}$ or $\mathbb{C}$ shall be denoted by $E$ and $F$. The closed unit ball of $E$ is denoted by $B_E$ and its topological dual by $E^*$.  The symbol $E \stackrel{1}{\hookrightarrow} F$ means that $E$ is a linear subspace of $F$ and $\|x\|_F \leq \|x\|_E$ for every $x \in E$; and $E\stackrel{1}{=}F$ means that $E$ is isometrically isomorphic to $F$.

By $L(E;F)$ we denote the space of all linear operators from $E$ to $F$ and by $\mathcal{L}(E;F)$ the Banach space of all continuous linear operators $T:E\longrightarrow F$ endowed with the usual sup norm. The same notation will be used if $E$ and $F$ are quasi-normed spaces.

For $x \in E$ and $j \in \mathbb{N}$, the symbol $x\cdot e_j$ denotes the sequence $(0,\ldots, 0,x,0, 0,\ldots ) \in E^\mathbb{N}$, where $x$ is placed at the $j$-th coordinate. The symbol $(x_{j})_{j=1}^{n}$, where $x_1, \ldots, x_n \in E$, stands for the sequence $(x_{1},x_{2},\ldots,x_{n},0,0,\ldots) \in E^\mathbb{N}$.

According to \cite{G. Botelho}, a {\it sequence class} is a rule $X$ that assigns to each Banach space $E$ a Banach space $X(E)$ of $E$-valued sequences, that is $X(E)$ is a vector subspace of $E^{\mathbb{N}}$ with the coordinatewise operations, such that:\\
(i) $c_{00}(E) \subseteq X(E) \stackrel{1}{\hookrightarrow}  \ell_\infty(E)$ for every Banach space $E$,\\
(ii) $\|x \cdot e_j\|_{X(E)}= \|x\|_E$  for every Banach space $E$, every $x \in E$ and every $j \in \mathbb{N}$.

To avoid ambiguity, we shall eventually denote the sequence class $X$ by $X(\cdot)$.

Given sequences classes $X$ and $Y$, we say that an operator $T \in {\cal L}(E;F)$ is {\it $(X;Y)$-summing} if $T((x_j))_{j=1}^\infty \in Y(F)$ whenever $(x_j)_{j=1}^\infty \in X(E)$. In this case, the induced linear operator
 $$\widehat{T} \colon X(E) \longrightarrow Y(F)~,~\widehat{T}\left( (x_j)_{j=1}^\infty \right) = \left(T(x_j)\right)_{j=1}^\infty ,$$
is continuous and
$$\|T \|_{X;Y} : = \|\widehat{T}\| $$
is a norm that makes the space ${\cal L}_{X;Y}(E;F)$ of $(X;Y)$-summing operators a Banach space. Whenever we refer to ${\cal L}_{X;Y}(E;F)$ we assume that it is endowed with the norm $\|\cdot \|_{X;Y}$.

A sequence class $X$ is {\it linearly stable} if, regardless of the Banach spaces $E$ and $F$, every operator $T \in {\cal L}(E;F)$ is $(X;X)$-summming and $\|T\|_{X;X} = \|T\|$, that is, $\mathcal{L}_{X;X}(E;F)\stackrel{1}{=} \mathcal{L}(E;F)$.

If the sequence classes $X$ and $Y$ are linearly stable and $X(\mathbb{K}) \stackrel{1}{\hookrightarrow} Y(\mathbb{K})$, then $\mathcal{L}_{X;Y}$ is a Banach operator ideal \cite[Theorem 3.6]{G. Botelho}.

\begin{example}\label{exsec}\rm Let $p \geq 1$. The following are well known linearly stable sequence classes, endowed with their usual norms: \\
$\bullet$ The class $E \mapsto c_0(E)$ of norm null sequences.\\
$\bullet$ The class $E \mapsto \ell_\infty(E)$ of bounded sequences.\\
$\bullet$ The class $E \mapsto \ell_p(E)$ of absolutely $p$-summable sequences.\\
$\bullet$ The class $E \mapsto \ell_p^w(E)$ of weakly $p$-summable sequences.\\
$\bullet$ The class $E \mapsto \ell_p\langle E \rangle$ of Cohen strongly $p$-summable sequences.\\
$\bullet$ The class $E \mapsto {\rm Rad}(E)$ of almost unconditionally summable sequences.

Consider
$${\rm RAD}(E) : = \left\{(x_{j})_{j=1}^{\infty} \in E^{\mathbb{N}} : \|(x_{j})_{j=1}^{\infty}\|_{{\rm RAD}(E)} := \sup\limits_{k} \|(x_{j})_{j=1}^{k}\|_{{\rm Rad}(E)} < \infty\right\}$$
(see \cite{J.Diestel, tarieladze}),
$$\ell_p^{\rm mid}(E) :=  \left\{(x_{j})_{j=1}^{\infty} \in E^{\mathbb{N}} : \|(x_{j})_{j=1}^{\infty}\|_{{\rm mid},p} := \sup\limits_{(\varphi_n)_{n=1}^\infty \in B_{\ell_p^w(E^*)}} \left(\sum_{j,n = 1}^\infty |\varphi_n(x_j)|^p \right)^{1/p} < \infty\right\},$$
and the closed subspace $\ell_p^u(E)$ of $\ell_p^w(E)$ formed by unconditionally $p$-summable sequences, that is
$$\ell_p^u(E) = \left\{(x_j)_{j=1}^\infty \in \ell_p^w(E) : \lim\limits_k \|(x_j)_{j=k}^\infty\|_{w,p} = 0\right\} $$
(see  \cite{A.Defant}).  Then the correspondences $E \mapsto {\rm RAD}(E)$, $E \mapsto \ell_p^{\rm mid}(E)$ and $E \mapsto \ell_p^u(E)$ are also linearly stable sequences classes.
\end{example}

The dual of a sequence class $X$ was introduced in \cite{Jamilson.dual} in the following fashion:
\begin{equation*}
	X^{\rm dual}(E) = \left\{(x_j)_{j=1}^\infty\ \mathrm{in\ } E: \sum_{j=1}^\infty \varphi_j(x_j)\ \mathrm{converges\ } \text{for every}\ (\varphi_j)_{j=1}^\infty\ \mathrm{in\ } X(E^*)\right\}.
\end{equation*}

A sequence class $X$ is {\it spherically complete} if $(\lambda_jx_j)_{j=1}^\infty \in X(E)$ and $\|(\lambda_jx_j)_{j=1}^\infty \|_{X(E)} = \|(x_j)_{j=1}^\infty\|_{X(E)}$ whenever $(x_j)_{j=1}^\infty \in X(E)$ and $(\lambda_j)_{j=1}^\infty \in \mathbb{K}^\mathbb{N}$ is such that $|\lambda_j| = 1$ for every $j$.

For example, the sequence classes $c_0(\cdot), \ell_\infty(\cdot), {\rm RAD}(\cdot), \ell_p(\cdot), \ell_p^w(\cdot), \ell_p\langle \cdot \rangle, \ell_p^u(\cdot), \ell_p^{\rm mid}(\cdot), 1 \leq p < \infty$, are spherically complete.

Let $X$ be a linearly stable and spherically complete sequence class. In \cite{Jamilson.dual} it is proved that the expression
\begin{equation*}
\left\|(x_{j})_{j=1}^{\infty} \right\|_{X^{\rm dual}}:= \sup_{(\varphi_{j})_{j=1}^{\infty}\in B_{X(E^*)} }\sum_{j=1}^{\infty}\left|\varphi_{j}(x_{j}) \right| = \sup_{(\varphi_{j})_{j=1}^{\infty}\in B_{X(E^*)} }\left|\sum_{j=1}^{\infty}\varphi_{j}(x_{j}) \right|
\end{equation*}
makes $X^{\rm dual}(E)$ a Banach space and $X^{\rm dual}(\cdot)$ a linearly stable spherically complete sequence class. Conditions on $X$ so that $X^{\rm dual}(E^*)$ is canonically isometrically isomorphic to $X(E)^*$ are also given in \cite{Jamilson.dual}.

For example, for $1 \leq p \leq \infty$ and $\frac{1}{p} + \frac{1}{p^*} = 1$, $(\ell_p^w)^{\rm dual} = \ell_{p^*}\langle \cdot \rangle$ and $(\ell_p)^{\rm dual} = \ell_{p^*}( \cdot )$ (the case $p = \infty$ of this last equality is somewhat surprising).

\section{Maximal ideals}
The purpose of this section is to find conditions on $X$ and $Y$ so that ${\cal L}_{X;Y}$ is a maximal Banach operator ideal. Oddly enough, we begin by giving plenty of counterexamples.
\begin{example}\label{exnew}\rm For $1 \leq p < \infty$, by ${\cal C}_p$ we denote the ideal of $p$-converging operators (see, e.g., \cite{chen}), that is, the operators that send weakly $p$-summable sequences to norm null sequences. In our notation, ${\cal C}_p = {\cal L}_{\ell_p^w(\cdot); c_0(\cdot)}$. The case $p = 1$ recovers the ideal of unconditionally summing operators from \cite[1.7.1]{A.Pietsch}. All these ideals are not maximal, see, e.g., \cite[Theorem 2.7]{chen}. In the same fashion, the classical ideal ${\cal CC} : = {\cal L}_{c_o^w(\cdot); c_0(\cdot)}$ of completely continuous operators is not maximal.
\end{example}
Since $c_0$ is not a dual space, there is no sequence class $Y$ such that $Y^{\rm dual} = c_0(\cdot)$. Our guess is that this is the fact behind the non maximality of the ideals in Examples \ref{exnew}. So, the search for maximality should be restricted to operator ideals of  the form ${\cal L}_{X;Y^{\rm dual}}$. As announced, we shall do that by considering tensor quasi-norms.

Of course we need ${\cal L}_{X;Y^{\rm dual}}$ to be a Banach operator ideal. So, according to the linear case of \cite[Theorem 3.6]{G. Botelho}, whenever we refer to ${\cal L}_{X;Y^{\rm dual}}$ in this section we assume that the sequence classes $X$ and $Y$ are linearly stable, $Y$ is spherically complete and $X(\mathbb{K}) \stackrel{1}{\hookrightarrow} Y^{\rm dual}(\mathbb{K})$. The linear cases of the characterizations proved in \cite[Proposition 2.4]{G. Botelho} shall be used making no explicit reference.

A careful look at the definition of reasonable tensor norm and at the proof of \cite[Proposition 6.1]{R. Ryan} makes the following definition quite natural.

\begin{definition}\rm Let $\varepsilon$ be the injective tensor norm. For Banach spaces $E$ and $F$, a quasi-norm $\alpha$ on $E\otimes F$ is said to be {\it reasonable} if $\varepsilon \leq \alpha$ and $ \alpha(x \otimes y) \le \|x\|\cdot\|y\|$ for all $x \in E$ and $y \in F$.
\end{definition}

Let $X$ and $Y$ be sequence classes. For Banach spaces $E$ and $F$, consider the map $\alpha_{X,Y} \colon E\otimes F\longrightarrow \mathbb{R}$ given by
$$\alpha_{X,Y}^{}(u)= \inf\left\lbrace \left\|(x_{j})_{j=1}^{n} \right\|_{X(E)} \cdot  \left\|(y_{j})_{j=1}^{n} \right\|_{Y(F)} : u=\sum_{j=1}^{n}x_{j}\otimes y_{j} \right\rbrace. $$

Only one condition on the sequences classes $X$ and $Y$ is needed for $\alpha_{X,Y}$ to be a reasonable quasi-tensor norm. The notion we define now is quite weaker than the related ones that can be found in the literature (see, e.g., \cite{Botelhojlucas, argentinos}).

A sequence class $X$ is \emph{monotone} if for every Banach space $E$ and all $m,n \in \mathbb{N}$ and $x_1, \ldots, x_n \in E$, the following holds: 
$$\|(\underbrace{0,0, \ldots, 0}_{m \rm{\,times}}, x_1, \ldots, x_n, 0,0, \ldots)\|_{X(E)} = \|(x_1, \ldots, x_n, 0,0,\ldots)\|_{X(E)}. $$

All sequence classes in Example \ref{exsec} are monotone.

\begin{proposition}\label{razoavel}
If $X$ and $Y$ are monotone sequence classes and $\varepsilon\leq \alpha_{X,Y}$, then $\alpha_{X,Y}^{}$ is a reasonable quasi-norm on $E\otimes F$ for all $E$ and $F$.
\end{proposition}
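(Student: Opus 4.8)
The plan is to check the quasi-norm axioms together with the two inequalities defining ``reasonable'', for fixed $E$ and $F$. First I would record that $\alpha_{X,Y}$ is a well-defined map with values in $[0,\infty)$: every $u\in E\otimes F$ has a finite representation $u=\sum_{j=1}^{n}x_{j}\otimes y_{j}$, and every finite sequence lies in $X(E)$ and in $Y(F)$ since $c_{00}(E)\subseteq X(E)$, so the infimum runs over a nonempty set of nonnegative reals. Homogeneity $\alpha_{X,Y}(\lambda u)=|\lambda|\,\alpha_{X,Y}(u)$ is routine via the bijection $(x_{j})_{j}\mapsto(\lambda x_{j})_{j}$ between representations of $u$ and of $\lambda u$ for $\lambda\neq0$, using that $X(E)$ is a normed space; the case $\lambda=0$ is the representation $0=0\otimes 0$ together with axiom (ii), which gives $\|0\cdot e_{1}\|_{X(E)}=\|0\|_{E}=0$. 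Positive-definiteness is where the standing hypothesis $\varepsilon\leq\alpha_{X,Y}$ is used: since $\varepsilon$ is a norm, $\alpha_{X,Y}(u)=0$ forces $\varepsilon(u)=0$ and hence $u=0$.

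The content of the proposition is the quasi-triangle inequality, and this is where monotonicity enters. I would first isolate a concatenation estimate: for all $x_{1},\dots,x_{n},x'_{1},\dots,x'_{m}\in E$,
$$\left\|(x_{1},\dots,x_{n},x'_{1},\dots,x'_{m})\right\|_{X(E)}\leq\left\|(x_{j})_{j=1}^{n}\right\|_{X(E)}+\left\|(x'_{j})_{j=1}^{m}\right\|_{X(E)},$$
and the analogue for $Y$. This follows by writing the left-hand sequence as the sum of $(x_{1},\dots,x_{n},0,0,\dots)$ and $(0,\dots,0,x'_{1},\dots,x'_{m},0,0,\dots)$, with the block $x'_{1},\dots,x'_{m}$ shifted $n$ places to the right; applying the triangle inequality in the Banach space $X(E)$; and then using monotonicity of $X$ to delete the leading block of $n$ zeros in the second summand.

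The quasi-triangle inequality then follows by the Chevet--Saphar rescaling device. Fix $u,v\in E\otimes F$; we may assume $u\neq0$ and $v\neq0$, since $\alpha_{X,Y}(w)=0$ implies $w=0$ by positive-definiteness. Given $\delta>0$, pick representations $u=\sum_{j}x_{j}\otimes y_{j}$ and $v=\sum_{j}x'_{j}\otimes y'_{j}$ whose products of norms are at most $\alpha_{X,Y}(u)+\delta$ and $\alpha_{X,Y}(v)+\delta$; these products are positive (a vanishing product would force $\|(x_{j})_{j}\|_{X(E)}=0$ or $\|(y_{j})_{j}\|_{Y(F)}=0$, hence $u=0$, using $X(E)\stackrel{1}{\hookrightarrow}\ell_{\infty}(E)$, and similarly for $v$). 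Replacing $x_{j}\otimes y_{j}$ by $(\lambda x_{j})\otimes(\lambda^{-1}y_{j})$ for a suitable $\lambda>0$, and likewise in the second representation, we may assume $\|(x_{j})_{j}\|_{X(E)}=\|(y_{j})_{j}\|_{Y(F)}=:s$ and $\|(x'_{j})_{j}\|_{X(E)}=\|(y'_{j})_{j}\|_{Y(F)}=:t$ with $s^{2}\leq\alpha_{X,Y}(u)+\delta$ and $t^{2}\leq\alpha_{X,Y}(v)+\delta$. Concatenating the two representations produces a representation of $u+v$, and the concatenation estimate yields $\alpha_{X,Y}(u+v)\leq(s+t)^{2}\leq 2(s^{2}+t^{2})\leq 2\big(\alpha_{X,Y}(u)+\alpha_{X,Y}(v)\big)+4\delta$; letting $\delta\to0$ gives the quasi-triangle inequality with constant $2$, so $\alpha_{X,Y}$ is a quasi-norm. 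Reasonableness is then immediate: $\varepsilon\leq\alpha_{X,Y}$ is the hypothesis, and the one-term representation $x\otimes y$ together with axiom (ii) gives $\|(x)\|_{X(E)}=\|x\cdot e_{1}\|_{X(E)}=\|x\|_{E}$ and $\|(y)\|_{Y(F)}=\|y\|_{F}$, whence $\alpha_{X,Y}(x\otimes y)\leq\|x\|_{E}\,\|y\|_{F}$.

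The only step with real content, and the one I expect to be the main obstacle, is the quasi-triangle inequality — specifically, marrying the rescaling normalization to the concatenation estimate; the rest is bookkeeping. The one subtlety worth flagging is that monotonicity is genuinely what makes the concatenation estimate work (it is what permits deleting the leading zeros), and it cannot be replaced by the embedding $X(E)\stackrel{1}{\hookrightarrow}\ell_{\infty}(E)$ alone.
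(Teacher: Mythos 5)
Your proof is correct and follows essentially the same route as the paper: both hinge on concatenating the two representations and using the triangle inequality in $X(E)$ and $Y(F)$ together with monotonicity to delete the leading zeros, yielding the quasi-triangle constant $2$. The only difference is cosmetic — you balance each representation so both factors equal $\sqrt{\alpha_{X,Y}(u)+\delta}$, while the paper normalizes the $Y$-factor to be at most $1$ — and you additionally spell out the routine verifications (well-definedness, homogeneity, definiteness via $\varepsilon\leq\alpha_{X,Y}$, and $\alpha_{X,Y}(x\otimes y)\leq\|x\|\,\|y\|$) that the paper declares straightforward.
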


\begin{proof}
Let us show that, for all Banach spaces $E$ and $F$ and all  $u_{1},u_{2}\in E\otimes F$ $$\alpha_{X,Y}^{}(u_{1}+u_{2})\leq 2\left( \alpha_{X,Y}^ {}(u_{1}) + \alpha_{X,Y}^{}(u_{2})\right).$$
Given $\eta>0$, choose representations $u_{i}=\sum\limits_{j=1}^{n}x_{ij}\otimes y_{ij}$ such that 	$$\left\|(x_{ij})_{j=1}^{n} \right\|_{X(E)}\leq \alpha_{X,Y}^{}(u_{i} ) + \eta \  \text{ and }  \ \left\|(y_{ij})_{j=1}^{n} \right\|_{Y(F)}\leq 1, \, i=1,2.$$
  So, $\sum\limits_{j=1}^{n}x_{1j}\otimes y_{1j}+ \sum\limits_{j=1}^{n}x_{2j} \otimes y_{2j}$ is a representation of $u_{1}+ u_{2}$.
  Using that $X$ and $Y$ are monotone, we get $\alpha_{X,Y}^{}(u_{1}+u_{2}) \leq$
\begin{align*}
	&\leq\|(x_{11}, \ldots, x_{1n}, x_{21}, \ldots, x_{2n},0,0,\ldots) \|_{X(E)}\cdot \| (y_{11}, \ldots, y_{1n}, y_{21}, \ldots, y_{2n},0,0,\ldots) \|_{Y(F)}\\
& \leq \left(\|(x_{11}, \ldots, x_{1n}, 0,0,\ldots) \|_{X(E)}+ \|(0, \ldots, 0, x_{21}, \ldots, x_{2n},0,0,\ldots) \|_{X(E)}\right)\cdot \\
& ~~~\cdot \left(\|(y_{11}, \ldots, y_{1n}, 0,0,\ldots) \|_{Y(F)}+ \|(0, \ldots, 0, y_{21}, \ldots, y_{2n},0,0,\ldots) \|_{Y(F)}\right)\\
& = \left(\|(x_{11}, \ldots, x_{1n}, 0,0,\ldots) \|_{X(E)}+ \|(x_{21}, \ldots, x_{2n},0,0,\ldots) \|_{X(E)}\right)\cdot \\
& ~~~\cdot \left(\|(y_{11}, \ldots, y_{1n}, 0,0,\ldots) \|_{Y(F)}+ \|(y_{21}, \ldots, y_{2n},0,0,\ldots) \|_{Y(F)}\right)\\
	&\leq  2\left(\alpha_{X,Y}^{}(u_{1}) + \alpha_{X,Y}^{}(u_{2}) + 2\eta \right).
	\end{align*}
The desired inequality follows by making $\eta \longrightarrow 0^+$.	

The other facts either follow from the definition of sequence class or are straightforward.
\end{proof}

Given a (non necessarily continuous) linear operator $T \colon E \longrightarrow F$, it is clear that  $$A_{T} \colon E \times F^*\longrightarrow \mathbb{K}~,~A_{T}(x,\psi)=\psi (T(x)),$$
is a (non necessarily continuous) bilinear form. Calling $\varphi_T$ the linearization of $A_T$, we have a  linear functional $\varphi_{T}\colon E\otimes F^*\longrightarrow \mathbb{K}$ satisfying
$$\varphi_{T}\left(\sum_{j=1}^{n}x_{j}\otimes \psi_{j} \right)= \sum_{j=1}^{n}\psi_{j}(T(x_{j}))$$
for every $\sum\limits_{j=1}^{n}x_{j}\otimes \psi_{j} \in E \otimes F^*$. Of course, the map $T \mapsto \varphi_T$ is linear.

To proceed we need to recall one more definition from \cite{G. Botelho}. A sequence class $X$ is said to be:\\
$\bullet$ \emph{Finitely determined} if for every sequence $(x_j)_{j=1}^\infty \in E^{\mathbb{N}}$, $(x_j)_{j=1}^\infty \in X(E)$ if and only if $\displaystyle\sup_k \left\|(x_j)_{j=1}^k  \right\|_{X(E)} < +\infty$ and, in this case, $\left\|(x_j)_{j=1}^\infty  \right\|_{X(E)} = \sup_k \left\|(x_j)_{j=1}^k  \right\|_{X(E)}. $\\
$\bullet$ {\it Finitely dominated} if there is a finitely determined sequence class $Y$ such that, for every Banach space $E$, $X(E)$ is a closed subspace of $Y(E)$ and one of the following conditions holds:\\
(i) For every sequence $(x_j)_{j=1}^\infty \in Y(E)$, $(x_j)_{j=1}^\infty \in X(E)$ if and only if $\displaystyle\lim_k \|(x_j)_{j=k}^\infty\|_{Y(E)} = 0$.\\ 
(ii) For every sequence $(x_j)_{j=1}^\infty \in Y(E)$, $(x_j)_{j=1}^\infty \in X(E)$ if and only if $\displaystyle\lim_{k,l} \|(x_j)_{j=k}^l\|_{Y(E)} = 0$.

For example, for $1 \leq p < \infty$, the classes $\ell_p(\cdot), \ell_p^w(\cdot), \ell_p\langle \cdot \rangle, \ell_p^{\rm mid}(\cdot), \ell_\infty(\cdot)$ and ${\rm RAD}(\cdot)$ are finitely determined; $c_0(\cdot)$ is finitely dominated by $\ell_\infty(\cdot)$, $\ell_p^u(\cdot)$ is finitely dominated by $\ell_p^w(\cdot)$ and ${\rm Rad}(\cdot)$ is finitely dominated by ${\rm RAD}(\cdot)$. Moreover, the dual $Y^{\rm dual}$ of a sequence class $Y$ is always finitely determined, even if $Y$ is not.

\begin{lemma}\label{1lema.ideal.maximal.qn}
	Let $T\in \mathcal{L}(E;F)$ and suppose that $\alpha_{X,Y}^{}$ is a reasonable quasi-norm.\\
 {\rm	(a)} If $T\in \mathcal{L}_{X;Y^{\rm dual}}(E;F)$, then $\varphi_{T}\colon E\otimes_{\alpha_{X,Y}^{}} F^*\longrightarrow \mathbb{K}$	is continuous and $\left\|\varphi_{T} \right\| \leq \left\|T \right\|_{X;Y^{\rm dual}}$.\\
 {\rm (b)} If, in addition, $X$ and $Y$ are finitely determined or finitely dominated, then the functional $\varphi_{T} \colon E\otimes_{\alpha_{X,Y}^{}} F^*\longrightarrow \mathbb{K}$ is continuous if and only if $T\in \mathcal{L}_{X;Y^{\rm dual}}(E;F)$; and, in this case, $\left\|T \right\|_{X;Y^{\rm dual}}= \left\|\varphi_{T} \right\|$.
\end{lemma}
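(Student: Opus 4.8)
The plan is to establish the two directions of Lemma~\ref{1lema.ideal.maximal.qn} by unwinding the definitions of $\varphi_T$, of the quasi-norm $\alpha_{X,Y}$, and of $\|\cdot\|_{X;Y^{\rm dual}}$, and then to use the $\sup_k$-characterizations of $X$ and $Y$ in part (b) to pass from finite to infinite sequences.

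For part (a), I would take $T \in \mathcal{L}_{X;Y^{\rm dual}}(E;F)$ and a tensor $u = \sum_{j=1}^n x_j \otimes \psi_j \in E \otimes F^*$. Then $\varphi_T(u) = \sum_{j=1}^n \psi_j(T(x_j))$, and since $(T(x_j))_{j=1}^n \in Y^{\rm dual}(F)$ with $\|(T(x_j))_{j=1}^n\|_{Y^{\rm dual}} \le \|T\|_{X;Y^{\rm dual}} \cdot \|(x_j)_{j=1}^n\|_{X(E)}$, I can bound $|\varphi_T(u)| = \big|\sum_j \psi_j(T(x_j))\big|$ using the pairing description of the $Y^{\rm dual}$-norm. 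Here the precise point is that $\big|\sum_j \varphi_j(z_j)\big| \le \|(z_j)\|_{Y^{\rm dual}} \cdot \|(\varphi_j)\|_{Y(F^*)}$ — essentially the definition of $\|\cdot\|_{Y^{\rm dual}}$ after normalizing the $\psi_j$'s, which requires $(\psi_j)_{j=1}^n \in Y(F^*)$; this holds because $Y$ is linearly stable (so finite sequences in any Banach space lie in $Y$). Combining, $|\varphi_T(u)| \le \|T\|_{X;Y^{\rm dual}} \|(x_j)_{j=1}^n\|_{X(E)} \|(\psi_j)_{j=1}^n\|_{Y(F^*)}$, and taking the infimum over all representations of $u$ gives $|\varphi_T(u)| \le \|T\|_{X;Y^{\rm dual}} \cdot \alpha_{X,Y}(u)$, so $\|\varphi_T\| \le \|T\|_{X;Y^{\rm dual}}$.

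For part (b), the reverse implication is the substantive one: assuming $\varphi_T \colon E \otimes_{\alpha_{X,Y}} F^* \to \mathbb{K}$ is continuous with norm $c := \|\varphi_T\|$, I must show $T \in \mathcal{L}_{X;Y^{\rm dual}}(E;F)$ with $\|T\|_{X;Y^{\rm dual}} \le c$. Fix $(x_j)_{j=1}^\infty \in X(E)$; I want $(T(x_j))_{j=1}^\infty \in Y^{\rm dual}(F)$ with norm at most $c \|(x_j)\|_{X(E)}$. Since $Y^{\rm dual}$ is always finitely determined (noted in the excerpt), it suffices to bound $\|(T(x_j))_{j=1}^k\|_{Y^{\rm dual}(F)}$ uniformly in $k$ by $c\|(x_j)\|_{X(E)}$. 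For each $k$, by the pairing formula for the $Y^{\rm dual}$-norm, $\|(T(x_j))_{j=1}^k\|_{Y^{\rm dual}} = \sup \big\{ \big|\sum_{j=1}^k \psi_j(T(x_j))\big| : (\psi_j)_{j=1}^k \in B_{Y(F^*)} \big\} = \sup \{ |\varphi_T(u)| : u = \sum_{j=1}^k x_j \otimes \psi_j,\ \|(\psi_j)_{j=1}^k\|_{Y(F^*)} \le 1 \}$. Each such $u$ satisfies $\alpha_{X,Y}(u) \le \|(x_j)_{j=1}^k\|_{X(E)} \cdot 1 \le \|(x_j)_{j=1}^\infty\|_{X(E)}$ (using monotonicity, or rather just the definition of $\alpha_{X,Y}$ together with $\|(x_j)_{j=1}^k\|_{X(E)} \le \|(x_j)_{j=1}^\infty\|_{X(E)}$, which holds for $X$ monotone/finitely determined), hence $|\varphi_T(u)| \le c\,\alpha_{X,Y}(u) \le c\|(x_j)_{j=1}^\infty\|_{X(E)}$. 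Taking the supremum over $k$ gives $(T(x_j))_{j=1}^\infty \in Y^{\rm dual}(F)$ with the desired norm bound, so $T \in \mathcal{L}_{X;Y^{\rm dual}}(E;F)$ and $\|T\|_{X;Y^{\rm dual}} \le c$; together with (a) this yields equality.

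The main obstacle I anticipate is bookkeeping around the hypotheses rather than a deep difficulty: one must check that the $\sup$ over finitely-supported $u$ with $\|(\psi_j)\|_{Y(F^*)} \le 1$ genuinely recovers the $Y^{\rm dual}$-norm — this is the identity $\|(z_j)_{j=1}^k\|_{Y^{\rm dual}} = \sup_{(\psi_j) \in B_{Y(F^*)}} \big|\sum_j \psi_j(z_j)\big|$ from \cite{Jamilson.dual}, valid since $Y$ is linearly stable and spherically complete (spherical completeness is what makes the version with $|\sum|$ instead of $\sum|\cdot|$ correct). I should also be careful that in part (a) I do not need the finitely-determined/finitely-dominated hypothesis, and that the "in this case" clause of (b) simply combines the two inequalities. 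The finitely-dominated case requires no separate argument beyond noting that a finitely-dominated $X$ embeds isometrically into a finitely-determined $Y'$ with the relevant norm still computed by truncations, so the same truncation argument applies; I would remark this briefly rather than redo it.
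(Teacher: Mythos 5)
Your argument is correct and follows essentially the same route as the paper's proof: part (a) by the pairing estimate for the $Y^{\rm dual}$-norm followed by the infimum over representations, and part (b) by expressing $\|(T(x_j))_{j=1}^n\|_{Y^{\rm dual}(F)}$ as a supremum of $|\varphi_T(u)|$ over elementary tensors with $(\psi_j)\in B_{Y(F^*)}$, bounding $\alpha_{X,Y}(u)$ by $\|(x_j)_{j=1}^\infty\|_{X(E)}$ via the $\sup_k$-characterization, and invoking that $Y^{\rm dual}$ is always finitely determined. The paper likewise writes out only the finitely determined case, so your closing remark about the finitely dominated case matches its level of detail.
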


\begin{proof} (a)  For every $u=\sum\limits_{j=1}^{n}x_{j}\otimes \psi_{j} \in E\otimes F^*$, we have
	\begin{equation*}
	\left|\varphi_{T}\left(u \right)  \right|= \left|\sum_{j=1}^{n}\psi_{j}(T(x_{j})) \right| \leq \sum_{j=1}^{n}\left|\psi_{j}(T(x_{j})) \right|
	\leq \left\|T \right\|_{X;Y^{\rm dual}} \cdot \left\|(x_{j})_{j=1}^{n} \right\|_{X(E)} \cdot \left\|(\psi_{j})_{j=1}^{n} \right\|_{Y(F^*)},
	\end{equation*}
	where the last inequality follows from a simple manipulation of the norm of $Y^{\rm dual}$.
Taking the infimum over all representations of $u$ it follows that the functional $\varphi_{T}\colon  E\otimes_{\alpha_{X,Y}^{}} F^*\longrightarrow \mathbb{K}$ is continuous with $\left\|\varphi_{T} \right\| \leq \left\|T \right\|_{X;Y^{\rm dual}}. $\\
	(b) We prove the case that $X$ and $Y$ are finitely determined. Given $(x_j)_{j=1}^\infty \in X(E)$, for every $n \in \mathbb{N}$,
	\begin{align*}
	\left\| (T(x_{j}))_{j=1}^{n} \right\|_{Y^{\rm dual}(F)} &= \sup_{(\psi_{j})_{j=1}^{\infty}\in B_{Y(F^*)}}\left|\sum_{j=1}^{n}\psi_{j}(T(x_{j})) \right| = \sup_{(\psi_{j})_{j=1}^{\infty}\in B_{Y(F^*)}}\left|\varphi_{T}\left(\sum_{j=1}^{n}x_{j}\otimes \psi_{j} \right)  \right| \\
	&\leq \sup_{(\psi_{j})_{j=1}^{\infty}\in B_{Y(F^*)}}\left\|\varphi_{T} \right\|\cdot\alpha_{X,Y}^{}\left(\sum_{j=1}^{n}x_{j}\otimes \psi_{j} \right)\\
	&\leq \left\|\varphi_{T} \right\| \cdot \left\|(x_{j})_{j=1}^{n} \right\|_{X(E)}\cdot \sup_{(\psi_{j})_{j=1}^{\infty}\in B_{Y(F^*)}}\left\|(\psi_{j})_{j=1}^{n} \right\|_{Y(F^*)}\\
&\leq \left\|\varphi_{T} \right\| \cdot \sup_k\left\|(x_{j})_{j=1}^{n} \right\|_{X(E)}\cdot \sup_{(\psi_{j})_{j=1}^{\infty}\in B_{Y(F^*)}}\sup_k\left\|(\psi_{j})_{j=1}^{k} \right\|_{Y(F^*)}\\
	&= \left\|\varphi_{T} \right\| \cdot \left\|(x_{j})_{j=1}^{\infty} \right\|_{X(E)}\cdot \sup_{(\psi_{j})_{j=1}^{\infty}\in B_{Y(F^*)}}\left\|(\psi_{j})_{j=1}^{\infty} \right\|_{Y(F^*)}\\
	&= \left\|\varphi_{T} \right\|\cdot \left\|(x_{j})_{j=1}^{\infty} \right\|_{X(E)}.
	\end{align*}
Since $Y^{\rm dual}$ is also finitely determined, taking the supremum over $n$ we get $(T(x_{j}))_{j=1}^{\infty} \in Y^{\rm dual}(F)$ and
$$\left\| (T(x_{j}))_{j=1}^{\infty} \right\|_{Y^{\rm dual}(F)} \leq \left\|\varphi_{T} \right\|\cdot \left\|(x_{j})_{j=1}^{\infty} \right\|_{X(E)}, $$		from which it follows that $T\in \mathcal{L}_{X;Y^{\rm dual}}(E;F)$  and  $ \left\|T \right\|_{X;Y^{\rm dual}}\leq \left\|\varphi_{T} \right\|$.
\end{proof}

In the same fashion of norms on tensor products (see \cite[Section 6.1]{R. Ryan}), we say that a reasonable quasi-norm $\alpha$ is {\it a tensor quasi-norm} if:\\
$\bullet$ $\alpha$ is uniform, that is, for all Banach spaces $E_1,E_2, F_1, F_2$ and all operators $T_i \in {\cal L}(E_i,F_i)$, $i = 1,2$,
$$\|T_1 \otimes T_2 \colon E_1 \otimes_\alpha E_2 \longrightarrow F_1 \otimes_\alpha F_2\| \leq \|T_1\|\cdot \|T_2\|.$$
$\bullet$ $\alpha$ is finitely generated, that is, for all Banach spaces $E,F$ and any $u \in E \otimes F$,
$$\alpha(u; E \otimes F) = \inf\left\{\alpha(u; M \otimes N) : u \in M \otimes N, M \in {\cal F}(E),N \in {\cal F}(F) \right\}, $$
where ${\cal F}(E)$ is the collection of all finite dimensional subspaces of $E$.

Let us see that, in the environment of sequences classes, tensor quasi-norms are not rare. A sequence class $X$   is said to be {\it finitely injective} if $\|(x_j)_{j=1}^k\|_{X(E)} \leq \|(i(x_j))_{j=1}^k\|_{X(F)}$ whenever $i \colon E \longrightarrow F$ is a metric injection, $k \in \mathbb{N}$ and $x_1, \ldots, x_k \in E$. If $X$ is also linearly stable, then we actually have $\|(x_j)_{j=1}^k\|_{X(E)} = \|(i(x_j))_{j=1}^k\|_{X(F)}$.

All sequence classes listed in Example \ref{exsec}, but $\ell_p\langle \cdot \rangle$, are finitely injective.

\begin{proposition} \label{porpr}
	Let $X$ and $Y$ be sequence classes such that $\alpha_{X,Y}^{}$ is a reasonable quasi-norm. If $X$ and $Y$ are linearly stable and finitely injective, then $\alpha_{X,Y}^{}$ is a tensor quasi-norm.
\end{proposition}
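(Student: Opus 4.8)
The plan is to verify the two clauses in the definition of a tensor quasi-norm — uniformity and being finitely generated — separately, observing that uniformity needs only linear stability while the finitely generated property is precisely where finite injectivity enters. Throughout we may use that $\alpha_{X,Y}$ is already known to be a reasonable quasi-norm on every $E\otimes F$, so nothing about reasonableness needs re-checking.

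For uniformity, fix Banach spaces $E_1,E_2,F_1,F_2$, operators $T_i\in\mathcal{L}(E_i;F_i)$ and $u\in E_1\otimes E_2$. If $u=\sum_{j=1}^{n}x_j\otimes y_j$ is any representation, then $(T_1\otimes T_2)(u)=\sum_{j=1}^{n}T_1(x_j)\otimes T_2(y_j)$ is a representation in $F_1\otimes F_2$, so
$$\alpha_{X,Y}\big((T_1\otimes T_2)(u)\big)\le\big\|(T_1(x_j))_{j=1}^{n}\big\|_{X(F_1)}\cdot\big\|(T_2(y_j))_{j=1}^{n}\big\|_{Y(F_2)}.$$
Since $X$ and $Y$ are linearly stable, the induced maps $\widehat{T_1}\colon X(E_1)\to X(F_1)$ and $\widehat{T_2}\colon Y(E_2)\to Y(F_2)$ are bounded with norms $\|T_1\|$ and $\|T_2\|$; applying them to the finite sequences $(x_j)_{j=1}^{n}$ and $(y_j)_{j=1}^{n}$ and then taking the infimum over all representations of $u$ yields $\alpha_{X,Y}((T_1\otimes T_2)(u))\le\|T_1\|\,\|T_2\|\,\alpha_{X,Y}(u)$, which is the required estimate.

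For the finitely generated property, the one substantive input is that a linearly stable and finitely injective sequence class has norms invariant under isometric embeddings; applied to the inclusion of a finite-dimensional subspace $M\hookrightarrow E$, this gives $\|(x_j)_{j=1}^{k}\|_{X(M)}=\|(x_j)_{j=1}^{k}\|_{X(E)}$ for all $x_1,\dots,x_k\in M$, and similarly for $Y$. Consequently, for $M\in\mathcal{F}(E)$, $N\in\mathcal{F}(F)$ with $u\in M\otimes N$, every representation of $u$ inside $M\otimes N$ is simultaneously a representation inside $E\otimes F$ carrying the \emph{same} value of $\|(x_j)\|_{X(\cdot)}\cdot\|(y_j)\|_{Y(\cdot)}$, so $\alpha_{X,Y}(u;E\otimes F)\le\alpha_{X,Y}(u;M\otimes N)$, whence $\alpha_{X,Y}(u;E\otimes F)$ is a lower bound for the infimum over all such $M,N$. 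For the reverse inequality, fix $\eta>0$, choose a representation $u=\sum_{j=1}^{n}x_j\otimes y_j$ in $E\otimes F$ with $\|(x_j)_{j=1}^{n}\|_{X(E)}\cdot\|(y_j)_{j=1}^{n}\|_{Y(F)}\le\alpha_{X,Y}(u;E\otimes F)+\eta$, and set $M:=\mathrm{span}\{x_1,\dots,x_n\}$, $N:=\mathrm{span}\{y_1,\dots,y_n\}$; by the norm-invariance this representation witnesses $\alpha_{X,Y}(u;M\otimes N)\le\alpha_{X,Y}(u;E\otimes F)+\eta$, and letting $\eta\to0^{+}$ closes the argument. I do not anticipate a genuine obstacle: the whole proof is bookkeeping with finite representations, and the least automatic ingredient is the isometric invariance of the sequence-class norms recalled just before the statement, which does the work in the finitely generated step.
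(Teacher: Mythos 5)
Your proposal is correct and follows essentially the same route as the paper: uniformity via linear stability of $X$ and $Y$ together with the infimum over representations, and the finitely generated property via the span of a near-optimal representation combined with the isometric invariance of the sequence norms under the inclusions $M\hookrightarrow E$, $N\hookrightarrow F$. The only cosmetic difference is that the paper derives $\alpha_{X,Y}(u;E\otimes F)\leq\alpha_{X,Y}(u;M\otimes N)$ from the already-established uniformity applied to the inclusion operators, whereas you obtain it directly from the norm invariance of representations; both arguments are valid.
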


\begin{proof}
Given $T_{i}\in \mathcal{L}(E_{i};F_{i})$, $i=1,2$,
$u= \sum\limits_{j=1}^{n}x_{j}\otimes y_{j}\in E_1\otimes_{\alpha_{X,Y}^{}}E_2$, the linear stability of $X$ and $Y$ gives
	\begin{align*}
	\alpha_{X,Y}^{}\left(T_{1}\otimes T_{2}(u)  \right)& =\alpha_{X,Y} \left(\sum_{j=1}^n T_1(x_j) \otimes T_2(y_j) \right) \\
&\leq \left\|(T_{1}(x_{j}))_{j=1}^{n} \right\|_{X(F_{1})}\cdot \left\|(T_{2}(y_{j}))_{j=1}^{n} \right\|_{Y(F_{2})}\\
	&\leq \left\|T_{1} \right\|\cdot \left\|T_{2} \right\| \cdot \left\|(x_{j})_{j=1}^{n} \right\|_{X(E_{1})}\cdot \left\|(y_{j})_{j=1}^{n} \right\|_{Y(E_{2})}.
	\end{align*}
 Since this holds for every representation of $u$, it follows that $\alpha_{X,Y}^{}\left(T_{1}\otimes T_{2}(u)  \right)\leq \left\|T_{1} \right\|\cdot \left\|T_{2} \right\|\cdot \alpha_{X,Y}^{}(u)$, so $\left\|T_{1}\otimes T_{2} \right\| \leq \left\| T_{1}\right\| \cdot \left\| T_{2}\right\|.$ This proves that $\alpha_{X,Y}$ is uniform.

Let $u\in E\otimes F$ be given. Given $\eta>0$, we can take a representation $u = \sum\limits_{j=1}^{n}x_{j}\otimes y_{j}$ so that $$\left\|(x_{j})_{j=1}^{n} \right\|_{X(E)}\cdot\left\|(y_{j})_{j=1}^{n} \right\|_{Y(F)} \leq \alpha_{X,Y}^{}(u; E\otimes F) + \eta. $$	Taking $M= {\rm span}\{x_{1},\ldots,x_{n}\}$ and $N= {\rm span}\{y_{1}\ldots,y_{n}\}$, we have $u\in M\otimes N$ and $\alpha_{X,Y}^{}(u;E\otimes F)\leq \alpha_{X,Y}^{}(u;M\otimes N)$ because  $\alpha_{X,Y}^{}$ is uniform. Moreover, the finite injectivity of $X$ and $Y$ yields
	\begin{align*}
	\alpha_{X,Y}^{}(u;M\otimes N)&\leq \left\|(x_{j})_{j=1}^{n} \right\|_{X(M)} \cdot \left\|(y_{j})_{j=1}^{n} \right\|_{Y(N)} \\
	&\leq\left\|(x_{j})_{j=1}^{n} \right\|_{X(E)}\cdot \left\|(y_{j})_{j=1}^{n} \right\|_{Y(F)} \leq \alpha_{X,Y}^{}(u; E\otimes F) + \eta.
	\end{align*}
It follows that $\alpha_{X,Y}^{}(u;E\otimes F)= \inf\left\lbrace \alpha_{X,Y}^{}(u;M\otimes N); u\in M\otimes N, M \in {\cal F}(E),  N \in {\cal F}(F) \right\rbrace$, proving that $\alpha_{X,Y}$ is finitely generated.
\end{proof}

Recall that a Banach operator ideal $[{\cal I}, \|\cdot\|_{\cal I}]$ is {\it maximal} (see \cite[p.\,197]{R. Ryan}) if it is the only Banach operator ideal $[{\cal J}, \|\cdot\|_{\cal J}]$ satisfying:\\
(i) ${\cal I}(E;F) \subseteq {\cal J}(E;F)$ for all Banach spaces $E$ and $F$ and $\|u\|_{\cal J} \leq \|u\|_{\cal I}$ for every $u \in {\cal I}(E;F)$, and\\
(ii) $\|u\|_{\cal J} = \|u\|_{\cal I}$ for every finite rank operator.

We denote by $\mathcal{CF}(E)$ the collection of all finite codimensional subspaces of $E$. For $M \in \mathcal{F}(E)$ we denote by $I_M \colon M \longrightarrow E$ we denote the inclusion operator and for $L \in \mathcal{CF}(F)$ we denote by $Q_L \colon F \longrightarrow F/L$ the quotient operator.

\begin{theorem}\label{maxim}
	Suppose that $\alpha_{X,Y}$ is a tensor quasi-norm and that $X$ and $Y$ are finitely determined or finitely dominated. For an operator $T\in \mathcal{L}(E;F)$, $T\in \mathcal{L}_{X;Y^{\rm dual}}(E;F)$ if and only if
$$s: = \sup\left\{\left\| Q_{L}\circ T \circ I_{M}\right\|_{X;Y^{\rm dual}} : (M,L)\in \mathcal{F}(E)\times \mathcal{CF}(F)\right\} < \infty,$$
and, in this case, $\left\|T \right\|_{X;Y^{\rm dual}} = s$.
In particular, the Banach operator ideal $\mathcal{L}_{X;Y^{\rm dual}}$ is maximal.
\end{theorem}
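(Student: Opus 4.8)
The plan is to exploit Lemma \ref{1lema.ideal.maximal.qn} together with the fact that $\alpha_{X,Y}$ is finitely generated, passing through the theory of continuous functionals on the completed tensor product. First I would observe that one direction is trivial: if $T\in\mathcal{L}_{X;Y^{\rm dual}}(E;F)$, then for every $M\in\mathcal{F}(E)$ and $L\in\mathcal{CF}(F)$ the composition $Q_L\circ T\circ I_M$ lies in $\mathcal{L}_{X;Y^{\rm dual}}(M;F/L)$ with $\|Q_L\circ T\circ I_M\|_{X;Y^{\rm dual}}\le\|Q_L\|\cdot\|T\|_{X;Y^{\rm dual}}\cdot\|I_M\|\le\|T\|_{X;Y^{\rm dual}}$ (the ideal property of $\mathcal{L}_{X;Y^{\rm dual}}$, valid under the standing hypotheses), so $s\le\|T\|_{X;Y^{\rm dual}}<\infty$. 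The content is the converse and the reverse inequality $\|T\|_{X;Y^{\rm dual}}\le s$.

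For the converse, suppose $s<\infty$. By Lemma \ref{1lema.ideal.maximal.qn}(b), it suffices to show that $\varphi_T\colon E\otimes_{\alpha_{X,Y}}F^*\to\mathbb{K}$ is continuous with $\|\varphi_T\|\le s$. So fix $u=\sum_{j=1}^n x_j\otimes\psi_j\in E\otimes F^*$; I want $|\varphi_T(u)|\le s\cdot\alpha_{X,Y}(u)$. Set $M={\rm span}\{x_1,\dots,x_n\}\in\mathcal{F}(E)$. The obstacle here — and the step I expect to be the main one — is producing a \emph{finite codimensional} subspace $L$ of $F$ adapted to $u$: the functionals $\psi_1,\dots,\psi_n\in F^*$ live on $F$, but to factor through $F/L$ I need them to descend. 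The natural choice is $L=\bigcap_{j=1}^n\ker\psi_j$, which is finite codimensional; then each $\psi_j$ factors as $\psi_j=\widetilde{\psi}_j\circ Q_L$ for a unique $\widetilde{\psi}_j\in(F/L)^*$ with $\|\widetilde{\psi}_j\|=\|\psi_j\|$ (here one uses that $L$ is contained in every $\ker\psi_j$, plus the canonical isometry $L^\perp\stackrel{1}{=}(F/L)^*$). Then
$$\varphi_T(u)=\sum_{j=1}^n\psi_j(T(x_j))=\sum_{j=1}^n\widetilde{\psi}_j\big(Q_L(T(I_M(x_j)))\big)=\varphi_{Q_L\circ T\circ I_M}\Big(\sum_{j=1}^n x_j\otimes\widetilde{\psi}_j\Big),$$
where now $Q_L\circ T\circ I_M\in\mathcal{L}_{X;Y^{\rm dual}}(M;F/L)$ has norm $\le s$. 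By Lemma \ref{1lema.ideal.maximal.qn}(a) applied to $Q_L\circ T\circ I_M$ on $M\otimes_{\alpha_{X,Y}}(F/L)^*$, the right-hand side is bounded by $s\cdot\alpha_{X,Y}\big(\sum_j x_j\otimes\widetilde{\psi}_j;\,M\otimes(F/L)^*\big)$. To convert this back to $\alpha_{X,Y}(u;E\otimes F^*)$ I use the isometry $(F/L)^*\stackrel{1}{=}L^\perp\hookrightarrow F^*$ together with the uniformity of $\alpha_{X,Y}$: the map $I_M\otimes j\colon M\otimes_{\alpha_{X,Y}}(F/L)^*\to E\otimes_{\alpha_{X,Y}}F^*$ (with $j$ the isometric inclusion $(F/L)^*\hookrightarrow F^*$) has norm $\le 1$, whence $\alpha_{X,Y}(u;E\otimes F^*)\le\alpha_{X,Y}\big(\sum_j x_j\otimes\widetilde{\psi}_j;M\otimes(F/L)^*\big)$. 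Actually I need the reverse of this to be useful; the clean way is: given any representation $u=\sum_k a_k\otimes\psi_k$ in $E\otimes F^*$ with $a_k\in M$, one can push it into $M\otimes(F/L)^*$ only when the $\psi_k\in L^\perp$, which is guaranteed for the \emph{original} $\psi_j$ by our choice of $L$ but not for an arbitrary representation — so instead I take an \emph{arbitrary} representation $u=\sum_{k=1}^m a_k\otimes\theta_k$, enlarge $M$ to contain all $a_k$ and replace $L$ by $\bigcap_k\ker\theta_k$, run the same computation, take the infimum over representations, and conclude $|\varphi_T(u)|\le s\cdot\alpha_{X,Y}(u)$.

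Putting it together: $\varphi_T$ is continuous on $E\otimes_{\alpha_{X,Y}}F^*$ with $\|\varphi_T\|\le s$, so by Lemma \ref{1lema.ideal.maximal.qn}(b) we get $T\in\mathcal{L}_{X;Y^{\rm dual}}(E;F)$ with $\|T\|_{X;Y^{\rm dual}}=\|\varphi_T\|\le s$; combined with the easy inequality $s\le\|T\|_{X;Y^{\rm dual}}$ this yields $\|T\|_{X;Y^{\rm dual}}=s$. For the final assertion that $\mathcal{L}_{X;Y^{\rm dual}}$ is maximal, I would invoke the standard characterization of maximality via this finite-dimensional/finite-codimensional factorization formula (e.g. \cite[p.\,197 ff.]{R. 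Ryan} or \cite[Section 8]{A.Defant}): if $[\mathcal{J},\|\cdot\|_{\mathcal{J}}]$ is any Banach operator ideal with $\mathcal{L}_{X;Y^{\rm dual}}\subseteq\mathcal{J}$, $\|\cdot\|_{\mathcal{J}}\le\|\cdot\|_{X;Y^{\rm dual}}$, and equal norms on finite rank operators, then for $T\in\mathcal{J}(E;F)$ each $Q_L\circ T\circ I_M$ has finite rank (being a map on the finite-dimensional $M$), hence $\|Q_L\circ T\circ I_M\|_{X;Y^{\rm dual}}=\|Q_L\circ T\circ I_M\|_{\mathcal{J}}\le\|T\|_{\mathcal{J}}$, so $s\le\|T\|_{\mathcal{J}}<\infty$ and by the formula just proved $T\in\mathcal{L}_{X;Y^{\rm dual}}(E;F)$ with $\|T\|_{X;Y^{\rm dual}}=s\le\|T\|_{\mathcal{J}}$; this forces $\mathcal{J}=\mathcal{L}_{X;Y^{\rm dual}}$ isometrically, which is maximality. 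The one point demanding care throughout is that $\alpha_{X,Y}$ is a genuine \emph{quasi}-norm rather than a norm, so the completed tensor product $E\widehat{\otimes}_{\alpha_{X,Y}}F^*$ is only a quasi-Banach space and duality statements must be read in that setting — but Lemma \ref{1lema.ideal.maximal.qn} is already stated at that level of generality, so no extra work is needed beyond being careful not to invoke norm-specific facts.
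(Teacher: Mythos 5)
Your overall architecture coincides with the paper's: the easy direction via the ideal inequality, the converse by showing $\varphi_T$ is continuous on $E\otimes_{\alpha_{X,Y}}F^*$ with $\|\varphi_T\|\le s$ and then invoking Lemma \ref{1lema.ideal.maximal.qn}(b), and maximality via the standard finite-dimensional/finite-codimensional criterion. The problem is in the one step you yourself flag as the main one. After backtracking from your first attempt, you propose to take an \emph{arbitrary} representation $u=\sum_k a_k\otimes\theta_k$, set $M=\operatorname{span}\{a_k\}$ and $L=\bigcap_k\ker\theta_k$, run the computation inside $M\otimes_{\alpha_{X,Y}}(F/L)^*$, and then ``take the infimum over representations.'' For that infimum to produce $\alpha_{X,Y}(u;E\otimes F^*)$ you need
\[
\alpha_{X,Y}\Bigl(\textstyle\sum_k a_k\otimes\widetilde\theta_k;\,M\otimes (F/L)^*\Bigr)\le \|(a_k)_{k}\|_{X(E)}\cdot\|(\theta_k)_{k}\|_{Y(F^*)},
\]
and the only handle you have on the left side is $\|(a_k)_k\|_{X(M)}\cdot\|(\widetilde\theta_k)_k\|_{Y((F/L)^*)}$. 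Comparing $\|(a_k)\|_{X(M)}$ with $\|(a_k)\|_{X(E)}$ goes the wrong way in general: linear stability applied to the metric injection $M\hookrightarrow E$ only gives $\|(a_k)\|_{X(E)}\le\|(a_k)\|_{X(M)}$. The missing inequality is precisely finite injectivity of $X$ (and likewise of $Y$ for the functionals), which is \emph{not} among the hypotheses of the theorem --- the theorem only assumes that $\alpha_{X,Y}$ is a tensor quasi-norm, i.e.\ uniform and finitely generated. Notice that your argument never actually uses finite generation, which should be a warning sign.

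The paper's proof closes exactly this gap by using finite generation directly: given $\eta>0$ it chooses $M\in\mathcal{F}(E)$, $N\in\mathcal{F}(F^*)$ and a representation of $u$ already lying in $M\otimes N$ with $\alpha_{X,Y}(u;M\otimes N)\le(1+\eta)\alpha_{X,Y}(u;E\otimes F^*)$, then picks $L\in\mathcal{CF}(F)$ with $(F/L)^*\stackrel{1}{=}N$ via $Q_L^*$, lifts the functionals through $(Q_L^*)^{-1}$ (whose tensorization with $Id_M$ is continuous by uniformity), and applies Lemma \ref{1lema.ideal.maximal.qn}(a) to $Q_L\circ T\circ I_M$. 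Everything else in your proposal --- the kernel-intersection idea for producing $L$, the identity $\varphi_T(u)=\varphi_{Q_L\circ T\circ I_M}(\sum_j x_j\otimes\widetilde\psi_j)$, the reduction of maximality to the factorization formula, and the remark that Lemma \ref{1lema.ideal.maximal.qn} already lives at the quasi-norm level --- is sound and matches the paper. Replace the ``infimum over arbitrary representations'' step by the finite-generation selection of a near-optimal finite-dimensional representation and the proof is complete.
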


\begin{proof} Suppose that $T\in \mathcal{L}_{X;Y^{\rm dual}}(E;F)$. For being a finite rank operator, each $Q_{L}\circ T \circ I_{M}$ belongs to  $\mathcal{L}_{X;Y^{\rm dual}}(M;F/L)$.  The ideal inequality of the norm of $\mathcal{L}_{X;Y^{\rm dual}}$ gives 
\[\left\|Q_{L}\circ T \circ I_{M} \right\|_{X;Y^{\rm dual}}\leq \left\| Q_{L}\right\|\cdot \left\| T\right\|_{X;Y^{\rm dual}}\cdot \left\| I_{M}\right\| =\left\| T\right\|_{X;Y^{\rm dual}},\] which proves that $s\leq \left\|T \right\|_{X;Y^{\rm dual}} < \infty.$
	
	Conversely, suppose that $s < \infty$. Let $u\in E\otimes F^*$ and $\eta>0$ be given. As $\alpha_{X,Y}^{}$ is finitely generated (Proposition \ref{porpr}), there are $M\in \mathcal{F}(E)$, $N\in \mathcal{F}(F^*)$ and a representation $u=\sum\limits_{j=1}^{n}x_{j}\otimes y_{j}^*\in M\otimes N$ such that \[\alpha_{X,Y}^{}\left(u; M\otimes N \right)\leq (1+\eta)\alpha_{X,Y}^{}\left(u ; E\otimes F^* \right).\]
	Let $L\in \mathcal{CF}(F)$ be such that $\left(F/L \right)^* \stackrel{1}{=}N$ by means of the operator $Q_{L}^*\colon  \left(F/L \right)^* \longrightarrow N$. Choose functionals $\psi_{j}\in \left( F/L\right)^* $ such that $Q_{L}^*(\psi_{j})= y_{j}^*, j= 1,\ldots,n$. In the chain
$$M \otimes_{\alpha_{X;Y}}  N \stackrel{Id_M \otimes (Q_L^*)^{-1}}{\xrightarrow{\hspace*{2cm}}} M \otimes_{\alpha_{X;Y}}(F/L)^* \stackrel{\varphi_{Q_L \circ T \circ I_M}}{\xrightarrow{\hspace*{2cm}}} \mathbb{K}, $$
the operator $Id_M \otimes (Q_L^*)^{-1}$ is continuous because $\alpha_{X;Y}$ is uniform (Proposition \ref{porpr}), and the functional $\varphi_{Q_L \circ T \circ I_M}$ is continuous with $\left\|\varphi_{Q_{L}\circ T \circ I_{M}} \right\|\le \left\| Q_{L}\circ T \circ I_{M}\right\|_{X;Y^{\rm dual}}  $ by Lemma \ref{1lema.ideal.maximal.qn} because $Q_{L}\circ T \circ I_{M}$ belongs to  $\mathcal{L}_{X;Y^{\rm dual}}(M;F/L)$. It follows that
\begin{align*} |\varphi_T(u)| & = \left|\sum_{j=1}^n y_j^* ( T(x_j))  \right| = \left|\sum_{j=1}^n Q_L^*(\psi_j)(T(x_j))\right| \\
&=  \left|\sum_{j=1}^n \psi_j(Q_L(T(x_j))\right| =  \left|\sum_{j=1}^n  \varphi_{Q_L \circ T \circ I_M}\left(x_j \otimes \psi_j \right)\right| \\
& = \left|\sum_{j=1}^n [ \varphi_{Q_L \circ T \circ I_M} \circ (Id_M \otimes (Q_L^*)^{-1})]\left( x_j \otimes y_j^* \right)\right|\\
&=  \left| [ \varphi_{Q_L \circ T \circ I_M} \circ (Id_M \otimes (Q_L^*)^{-1})]\left( \sum_{j=1}^n x_j \otimes y_j^* \right)\right| \\
&= \left| [ \varphi_{Q_L \circ T \circ I_M} \circ (Id_M \otimes (Q_L^*)^{-1})](u)\right|\\
&\leq \|\varphi_{Q_L \circ T \circ I_M}\|\cdot \|Id_M \otimes (Q_L^*)^{-1}) \| \cdot \alpha_{X,Y}(u ; M \otimes N)\\
& \leq \left\| Q_{L}\circ T \circ I_{M}\right\|_{X;Y^{\rm dual}}  \cdot \|Id_M \|\cdot \|(Q_L^*)^{-1}) \| \cdot (1+\eta)\alpha_{X,Y}(u ; E \otimes F^*)\\
& \leq s \cdot (1+\eta)\alpha_{X,Y}(u ; E \otimes F^*).
\end{align*}
Making $\eta \longrightarrow 0^+$ we get $\left|\varphi_{T}(u) \right|\leq s\cdot \alpha_{X,Y}^{}\left(u \right)$, which implies the continuity of the functional $\varphi_{T}\colon E\otimes_{\alpha_{X,Y}^{}}F^*\longrightarrow \mathbb{K}$ and $\|\varphi_T\| \leq s$. Calling Lemma \ref{1lema.ideal.maximal.qn} once again it follows that $T\in \mathcal{L}_{X;Y^{\rm dual}}(E;F)$ and $\left\|T \right\|_{X;Y^{\rm dual}}\le \left\|\varphi_{T} \right\|\leq s .$ This completes the proof of the first assertion.

The second assertion follows from the first one combined with \cite[8.11]{R. Ryan} (see also \cite[Theorem 8.7.5]{A.Pietsch}).
\end{proof}

The next corollary is just a combination of the theorem above with \cite[Corollary 17.8(4)]{A.Defant}.

\begin{corollary} Let $u \in {\cal L}(E;F)$ be given. Under the assumptions of Theorem \ref{maxim} we have $u \in \mathcal{L}_{X;Y^{\rm dual}}(E;F)$ if and only if $u^{**} \in \mathcal{L}_{X;Y^{\rm dual}}(E^{**};F^{**})$ and $\|u\|_{X,Y^{\rm dual}} = \|u^{**}\|_{X,Y^{\rm dual}}$.
\end{corollary}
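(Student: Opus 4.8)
The plan is to obtain the statement as a formal consequence of the maximality of $\mathcal{L}_{X;Y^{\rm dual}}$ --- which is the ``in particular'' clause of Theorem \ref{maxim} --- together with the general behaviour of maximal Banach operator ideals under bidualization. Concretely, the proof has only two steps. First, invoke Theorem \ref{maxim}: under its standing hypotheses ($\alpha_{X,Y}$ a tensor quasi-norm, $X$ and $Y$ finitely determined or finitely dominated, plus the blanket assumptions of this section that make $\mathcal{L}_{X;Y^{\rm dual}}$ a Banach operator ideal), $[\mathcal{L}_{X;Y^{\rm dual}},\|\cdot\|_{X;Y^{\rm dual}}]$ is a \emph{maximal} Banach operator ideal. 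Second, recall that \cite[Corollary 17.8(4)]{A.Defant} shows that every maximal Banach operator ideal $[\mathcal{A},\|\cdot\|_{\mathcal{A}}]$ is ``bidually regular'': for all Banach spaces $E$, $F$ and every $u\in\mathcal{L}(E;F)$ one has $u\in\mathcal{A}(E;F)$ if and only if $u^{**}\in\mathcal{A}(E^{**};F^{**})$, and in that case $\|u\|_{\mathcal{A}}=\|u^{**}\|_{\mathcal{A}}$. Applying this with $\mathcal{A}=\mathcal{L}_{X;Y^{\rm dual}}$ yields exactly the corollary.

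Before quoting \cite[Corollary 17.8(4)]{A.Defant} there are two small compatibility points to settle. The cited result is stated for genuine \emph{normed} (Banach) operator ideals, and here $\|\cdot\|_{X;Y^{\rm dual}}$ is indeed an honest norm, not merely a quasi-norm: this is guaranteed by the standing hypotheses of Section~2 ($X$, $Y$ linearly stable, $Y$ spherically complete, $X(\mathbb{K})\stackrel{1}{\hookrightarrow}Y^{\rm dual}(\mathbb{K})$), which make $\mathcal{L}_{X;Y^{\rm dual}}$ a Banach operator ideal in the sense recalled at the end of the Introduction. Also, $u^{**}$ is to be read as the second adjoint $(u^{*})^{*}\colon E^{**}\longrightarrow F^{**}$, which satisfies $J_F\circ u=u^{**}\circ J_E$ for the canonical embeddings $J_E\colon E\hookrightarrow E^{**}$ and $J_F\colon F\hookrightarrow F^{**}$; this compatibility is what makes the equivalence and the isometric equality in \cite[Corollary 17.8(4)]{A.Defant} meaningful, and it is all that is needed to match notations.

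I do not expect any genuine obstacle: all the analytic work has already been packaged into Theorem \ref{maxim}, and what remains is the purely ideal-theoretic fact that maximality forces bidual regularity. If one preferred to avoid citing \cite[Corollary 17.8(4)]{A.Defant} as a black box, one could reprove it in the present setting by feeding the finite-rank supremum formula of Theorem \ref{maxim}, namely $\|T\|_{X;Y^{\rm dual}}=\sup\{\|Q_L\circ T\circ I_M\|_{X;Y^{\rm dual}}\}$, into the principle of local reflexivity: each finite-dimensional compression $Q_L\circ u^{**}\circ I_M$ of $u^{**}$ is approximated in $\|\cdot\|_{X;Y^{\rm dual}}$ by compressions $Q_{L'}\circ u\circ I_{M'}$ taken inside $E$ and $F$, and conversely. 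Since this merely re-derives \cite[Corollary 17.8(4)]{A.Defant} in our special case, the cleanest route is the one indicated: combine Theorem \ref{maxim} with that corollary.
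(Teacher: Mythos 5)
Your proposal is correct and coincides with the paper's own argument: the paper proves the corollary precisely by combining the maximality established in Theorem \ref{maxim} with \cite[Corollary 17.8(4)]{A.Defant}. The compatibility remarks and the optional local-reflexivity rederivation you add are sound but not needed beyond what the paper already does.
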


\begin{examples}\rm (a) Theorem \ref{maxim} recovers the following well known facts.\\
$\bullet$ The Banach ideal of absolutely $(q,p)$-summing operators:
$$\Pi_{q,p} := {\cal L}_{\ell_p^w(\cdot); \ell_q(\cdot)} = {\cal L}_{\ell_p^w(\cdot); [\ell_{q^*}(\cdot)]^{\rm dual}},$$
$1 \leq p \leq q < \infty$, is maximal \cite[Proposition 10.2]{J.Diestel}. In particular, the ideal $\Pi_p$ of absolutely $p$-summing operators is maximal.\\
$\bullet$ The Banach ideal of Cohen strongly $(q,p)$-summing operators:
$${\cal D}_{q,p} := {\cal L}_{\ell_p(\cdot); \ell_q\langle\cdot \rangle} = {\cal L}_{\ell_p(\cdot); [\ell^w_{q*}(\cdot )]^{\rm dual}},$$
$1 \leq p \leq q < \infty$, is maximal. Although we found no reference to quote, we believe this is a well known fact. \\
$\bullet$ The Banach ideal of cotype $q$ operators:
$$\mathfrak{C}_q := {\cal L}_{{\rm RAD}(\cdot); \ell_q(\cdot)} = {\cal L}_{{\rm RAD}(\cdot); [\ell_{q^*}(\cdot)]^{\rm dual}},$$
$2 \leq q < \infty$, is maximal \cite[17.4]{A.Defant}.\\
{\rm (b)} Just to illustrate the new information that can be obtained from  Theorem \ref{maxim} we mention that the Banach ideals
$${\cal L}_{\ell_p^{\rm mid}(\cdot); \ell_q\langle\cdot \rangle} = {\cal L}_{\ell_p^{\rm mid}(\cdot); [\ell^w_{q*}(\cdot )]^{\rm dual}} {\rm ~~and~~} {\cal L}_{\ell_p^{\rm mid}(\cdot); \ell_q (\cdot )} = {\cal L}_{\ell_p^{\rm mid}(\cdot); [\ell_{q*}(\cdot )]^{\rm dual}},$$
which were studied in \cite{espaco.mid, J.R.Campos.J.Santos}, are maximal.
\end{examples}

\section{The dual of $E \otimes_{\alpha_{X,Y}}F$}

As is usual in the case of tensor norms (see \cite{A.Defant, R. Ryan}), for the tensor quasi-norm $\alpha_{X,Y}$ we describe the linear functionals on $E \otimes_{\alpha_{X,Y}}F $ as linear operators from $E$ to $F^*$ and as bilinear forms on $E \times F$. As a consequence we show when these tensor quasi-norms satisfy a condition that is equivalent to maximality of the corresponding operator ideal in the case of tensor norms.

For the first part of this section, which comprises Theorems \ref{dualB} and \ref{Teo.prin.1}, we want $\alpha_{X,Y}$ to be a reasonable quasi-norm, so we will suppose that $X$ and $Y$ are monotone sequences classes and $\varepsilon \leq \alpha_{X,Y}$.

To describe the linear functionals on $E \otimes_{\alpha_{X,Y}}F $ as bilinear forms we need one more concept introduced in \cite{G. Botelho}. A bilinear form $A \colon E \times F \longrightarrow \mathbb{K}$ is said to be $(X,Y;\ell_1)$-summing if $(A(x_j,y_j))_{j=1}^\infty \in \ell_1$ whenever $(x_j)_{j=1}^\infty \in X(E)$ and $(y_j)_{j=1}^\infty \in Y(F)$. The space ${\cal L}_{X,Y;\ell_1}(E,F;\mathbb{K})$ of all such bilinear forms is a Banach space under the norm
$$\|A\|_{X,Y;\ell_1} = \sup\left\{\| (A(x_j,y_j))_{j=1}^\infty \|_1  : (x_j)_{j=1}^\infty \in B_{X(E)},  (y_j)_{j=1}^\infty \in B_{Y(F)}\right\}.$$

We also need a property that is neither weaker nor stronger than being spherically complete: a sequence class $X$ is said to be {\it finitely boundedly complete}, {\it FBC} for short, if for any Banach space $E$, every $n \in \mathbb{N}$ and all   $(x_{j})_{j=1}^{n}\in X(E)$  and $(\lambda_{j})_{j=1}^{n}\in \ell_{\infty}$, it holds $(\lambda_{j}x_{j})_{j=1}^{n}\in X(E)$ and
	$\left\|(\lambda_{j}x_{j})_{j=1}^{n} \right\|_{X(E)}\leq \left\|(\lambda_{j})_{j=1}^{n} \right\|_\infty\cdot \left\|(x_{j})_{j=1}^{n} \right\|_{X(E)}. $

It is easy to check that the sequence classes $c_0(\cdot), \ell_\infty(\cdot), \ell_p(\cdot), \ell_p^w(\cdot), \ell_p^u(\cdot), \ell_p\langle \cdot \rangle, \ell_p^{\rm mid}(\cdot)$, $1 \leq p < \infty$, are FBC. Kahane's contraction principle \cite[12.2]{J.Diestel} guarantees that ${\rm Rad}(\cdot)$ and ${\rm RAD}(\cdot)$ are FBC in the real case $\mathbb{K} = \mathbb{R}$.

\begin{theorem}\label{dualB}
	Suppose that $X$ and $Y$ are finitely determined or finitely dominated and that one of them is FBC. Then,
	\[(E\otimes_{\alpha_{X,Y}^{}}F)^*\stackrel{1}{=} \mathcal{L}_{X,Y;\ell_{1}}(E,F;\mathbb{K}).\]
\end{theorem}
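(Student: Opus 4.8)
The plan is to establish the isometric identification by exhibiting, for each continuous functional $\phi$ on $E \otimes_{\alpha_{X,Y}} F$, an associated bilinear form $A_\phi \colon E \times F \longrightarrow \mathbb{K}$, $A_\phi(x,y) = \phi(x \otimes y)$, and showing that the correspondence $\phi \mapsto A_\phi$ is a surjective linear isometry onto $\mathcal{L}_{X,Y;\ell_1}(E,F;\mathbb{K})$. First I would check that $A_\phi$ is bounded and bilinear: bilinearity is immediate from linearity of $\phi$ and the tensor map, and $|A_\phi(x,y)| = |\phi(x \otimes y)| \leq \|\phi\| \cdot \alpha_{X,Y}(x \otimes y) \leq \|\phi\| \cdot \|x\| \cdot \|y\|$ since $\alpha_{X,Y}$ is reasonable, so $\|A_\phi\| \leq \|\phi\|$. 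Conversely, every bounded bilinear form $A$ linearizes to a functional on the algebraic tensor product, and the content is to see that $A \in \mathcal{L}_{X,Y;\ell_1}$ precisely controls continuity with respect to $\alpha_{X,Y}$.

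The core of the argument is the two inequalities $\|A_\phi\|_{X,Y;\ell_1} \leq \|\phi\|$ and $\|\phi\| \leq \|A_\phi\|_{X,Y;\ell_1}$ (and, for surjectivity, that every $A \in \mathcal{L}_{X,Y;\ell_1}$ arises this way with its linearization continuous). For the second of these, given a representation $u = \sum_{j=1}^n x_j \otimes y_j$, I would bound $|\phi(u)| = |\sum_j A(x_j, y_j)| \leq \sum_j |A(x_j,y_j)|$; here is where the FBC hypothesis on (say) $X$ enters. Writing $A(x_j, y_j) = \lambda_j A(x_j, y_j)$ with $|\lambda_j| = 1$ chosen so that $\lambda_j A(x_j, y_j) = |A(x_j, y_j)|$, and using that the finite sequence $(\lambda_j x_j)_{j=1}^n$ lies in $X(E)$ with $\|(\lambda_j x_j)_{j=1}^n\|_{X(E)} \leq \|(x_j)_{j=1}^n\|_{X(E)}$ (FBC, with $\|(\lambda_j)\|_\infty = 1$), one gets $\sum_j |A(x_j,y_j)| = \sum_j A(\lambda_j x_j, y_j) \leq \|A\|_{X,Y;\ell_1} \|(\lambda_j x_j)_{j=1}^n\|_{X(E)} \|(y_j)_{j=1}^n\|_{Y(F)} \leq \|A\|_{X,Y;\ell_1} \|(x_j)_{j=1}^n\|_{X(E)} \|(y_j)_{j=1}^n\|_{Y(F)}$; taking the infimum over representations yields $|\phi(u)| \leq \|A\|_{X,Y;\ell_1}\, \alpha_{X,Y}(u)$, i.e.\ $\|\phi\| \leq \|A_\phi\|_{X,Y;\ell_1}$. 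The same estimate, read for an arbitrary bounded bilinear $A$, shows its linearization is $\alpha_{X,Y}$-continuous whenever $A \in \mathcal{L}_{X,Y;\ell_1}$, giving surjectivity. For the reverse inequality $\|A_\phi\|_{X,Y;\ell_1} \leq \|\phi\|$: given $(x_j) \in B_{X(E)}$ and $(y_j) \in B_{Y(F)}$ and $n \in \mathbb{N}$, pick unimodular $\lambda_j$ with $\lambda_j \phi(x_j \otimes y_j) = |\phi(x_j \otimes y_j)|$ and estimate $\sum_{j=1}^n |A_\phi(x_j,y_j)| = |\phi(\sum_{j=1}^n \lambda_j x_j \otimes y_j)| \leq \|\phi\|\, \alpha_{X,Y}(\sum_{j=1}^n \lambda_j x_j \otimes y_j) \leq \|\phi\|\, \|(\lambda_j x_j)_{j=1}^n\|_{X(E)}\, \|(y_j)_{j=1}^n\|_{Y(F)} \leq \|\phi\|$, again by FBC; letting $n \to \infty$ gives the bound.

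The finitely determined / finitely dominated hypothesis is what lets me pass from finite-length estimates to the full sequences: it guarantees that $\|(x_j)_{j=1}^n\|_{X(E)} \leq \|(x_j)_{j=1}^\infty\|_{X(E)}$ (and likewise for $Y$ and for $\ell_1$-norms of the image sequences), so that the $n$-truncated computations above are uniformly controlled and the suprema defining $\|A\|_{X,Y;\ell_1}$ can be computed via finite sections — exactly as in the truncation arguments in the proof of Lemma \ref{1lema.ideal.maximal.qn}(b). In the finitely dominated case one additionally needs to verify that the tails vanish, using the defining condition (i) or (ii) of finite domination, but this is routine once the finitely determined case is in hand. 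The main obstacle I anticipate is the bookkeeping required to handle both hypotheses (finitely determined versus finitely dominated) on each of $X$ and $Y$ simultaneously while only assuming one of them is FBC — one must be careful to apply FBC to whichever class has it and route the unimodular multipliers through that factor, and to check that the argument is symmetric enough that it does not matter whether $X$ or $Y$ carries the FBC property. Everything else — bilinearity, the reasonable-quasi-norm estimates, and the identification of the norms — is a direct computation.
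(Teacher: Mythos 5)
Your proposal is correct and follows essentially the same route as the paper: the canonical correspondence $\varphi \leftrightarrow A_\varphi$, with FBC absorbing the unimodular multipliers in the estimate $\|A_\varphi\|_{X,Y;\ell_1}\le\|\varphi\|$ and the finitely determined/dominated hypothesis passing from finite sections to full sequences. (The only cosmetic differences are that the paper writes the $\ell_1$-norm as a supremum over $B_{\ell_\infty}$ rather than choosing explicit unimodular scalars, and that your use of FBC in the easy direction $\|\varphi\|\le\|A_\varphi\|_{X,Y;\ell_1}$ is superfluous, since that bound follows directly from the definition of $\|\cdot\|_{X,Y;\ell_1}$.)
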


\begin{proof}
Let us see that map
	$\Psi \colon \mathcal{L}_{X,Y;\ell_{1}}(E,F;\mathbb{K})\longrightarrow (E\otimes_{\alpha_{X,Y}^{}}F)^*$ given by
	$$\Psi(A)\left(\sum_{j=1}^{n}x_{j}\otimes y_{j} \right)= \sum_{j=1}^{n}A(x_{j},y_{j}), $$
is a well defined bounded linear operator. First note that $\Psi(A)$ is the linearization of the bilinear form $A$, so it is a well defined linear functional on $E \otimes F$. To check its continuity with respect to $\alpha_{X,Y}$, note that,
for $u=\sum\limits_{j=1}^{n}x_{j}\otimes y_{j}\in E\otimes F$, since $A\in \mathcal{L}_{X,Y;\ell_{1}}(E,F;\mathbb{K})$  we have
	\begin{align*}
	\left|\Psi(A)(u) \right| = \left|\sum_{j=1}^{n}A(x_{j},y_{j}) \right|\leq \sum_{j=1}^{n}\left|A(x_{j},y_{j}) \right|  \leq \left\|A \right\|_{X,Y;\ell_{1}}\cdot \left\|(x_{j})_{j=1}^{n} \right\|_{X(E)}\cdot \left\|(y_{j})_{j=1}^{n} \right\|_{Y(F)}.
	\end{align*}
Taking the infimum over all representations of $u$ it follows that $\left|\Psi(A)(u) \right|\leq \left\|A \right\|_{X,Y;\ell_{1}}\alpha_{X,Y}^{}(u)$, proving that $\Psi(A)$ is a bounded linear functional and
$\left\|\Psi(A) \right\|\leq \left\|A \right\|_{X,Y;\ell_{1}}$.
	
Now it is enough to prove that $\Psi$ is a surjective isometry. Given $\varphi\in (E\otimes_{\alpha_{X,Y}^{}}F)^*$, it is clear that
$$A_{\varphi} \colon E\times F\longrightarrow \mathbb{K} : A_{\varphi}(x,y)= \varphi(x\otimes y),$$
is a bilinear form. Taking $(x_{j})_{j=1}^{n}$ in $E$ and $(y_{j})_{j=1}^{n}$ in $F$, assuming $X$ is FBC (the other case is analogous), we get
	\begin{align*}
	\left\|(A_{\varphi}(x_{j},y_{j}))_{j=1}^{n} \right\|_{1} &=  \sup_{(\lambda_{j})_{j=1}^{\infty}\in B_{\ell_{\infty}}}\left|\sum_{j=1}^{n}\lambda_{j} A_{\varphi}(x_{j},y_{j}) \right|
	= \sup_{(\lambda_{j})_{j=1}^{\infty}\in B_{\ell_{\infty}}}\left|\varphi\left( \sum_{j=1}^{n}(\lambda_{j}x_{j})\otimes y_{j})\right) \right|\\
	&\leq \sup_{(\lambda_{j})_{j=1}^{\infty}\in B_{\ell_{\infty}}}\left\|\varphi \right\|\cdot \left\|(\lambda_{j}x_{j})_{j=1}^{n} \right\|_{X(E)}\cdot \left\|(y_{j})_{j=1}^{n} \right\|_{Y(F)}\\
	&\leq \left\|\varphi \right\|\cdot \left\|(x_{j})_{j=1}^{n} \right\|_{X(E)}\cdot \left\|(y_{j})_{j=1}^{n} \right\|_{Y(F)}.
	\end{align*}
Using that $X$ and $Y$ are finitely determined we conclude that $A_{\varphi}\in \mathcal{L}_{X,Y;\ell_{1}}(E,F;\mathbb{K})$ and
	$\left\|A_{\varphi} \right\|_{X,Y;\ell_{1}}\leq \left\| \varphi \right\|$. A straightforward computation shows that $\Psi(A_{\varphi})= \varphi$ and completes the proof.
\end{proof}


Now we represent linear functionals on $E\otimes_{\alpha_{X,Y}^{}}F$ as linear operators from $E$ to $F^*$.

\begin{theorem}\label{Teo.prin.1}
	 Suppose that $X$ and $Y$ are finitely determined and $Y$ is spherically complete. Then, \[(E\otimes_{\alpha_{X,Y}^{}} F )^*\stackrel{1}{=} \mathcal{L}_{X;Y^{\rm dual}}(E;F^*).\]	
\end{theorem}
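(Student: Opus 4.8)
The plan is to construct and analyze the map
$$\Psi\colon\mathcal{L}_{X;Y^{\rm dual}}(E;F^*)\longrightarrow(E\otimes_{\alpha_{X,Y}}F)^*,$$
where $\Psi(T)$ is the linear functional on $E\otimes F$ that linearizes the bilinear form $(x,y)\mapsto(Tx)(y)$; equivalently $\Psi(T)=\varphi_T\circ(\mathrm{id}_E\otimes J_F)$, with $J_F\colon F\to F^{**}$ the canonical embedding and $\varphi_T\colon E\otimes F^{**}\to\mathbb{K}$ the functional attached to $T$ as in the paragraph preceding Lemma \ref{1lema.ideal.maximal.qn}, applied now with $F^*$ playing the role of $F$. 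This is the natural candidate because Lemma \ref{1lema.ideal.maximal.qn} is valid for an arbitrary Banach space in its second slot and $X,Y$ are finitely determined here, so it applies to $F^*$ and tells us that $\varphi_T\colon E\otimes_{\alpha_{X,Y}}F^{**}\to\mathbb{K}$ is continuous if and only if $T\in\mathcal{L}_{X;Y^{\rm dual}}(E;F^*)$, and then $\|T\|_{X;Y^{\rm dual}}=\|\varphi_T\|$. Thus the entire task reduces to transferring continuity and norm between $E\otimes_{\alpha_{X,Y}}F^{**}$ and $E\otimes_{\alpha_{X,Y}}F$.

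First I would check that $\Psi$ is a well-defined contraction. For $T$ and a representation $u=\sum_{j=1}^n x_j\otimes y_j\in E\otimes F$ we have $|\Psi(T)(u)|\le\sum_{j=1}^n|(Tx_j)(y_j)|=\sum_{j=1}^n|(J_Fy_j)(Tx_j)|$. Linear stability of $Y$ together with $\|J_F\|=1$ gives $(J_Fy_j)_{j=1}^n\in Y(F^{**})$ with $\|(J_Fy_j)_{j=1}^n\|_{Y(F^{**})}\le\|(y_j)_{j=1}^n\|_{Y(F)}$; feeding this into the description of the norm of $Y^{\rm dual}(F^*)$ (a supremum of $\sum_j|\xi_j(\psi_j)|$ over $(\xi_j)\in B_{Y(F^{**})}$) bounds the last sum by $\|(Tx_j)_{j=1}^n\|_{Y^{\rm dual}(F^*)}\cdot\|(y_j)_{j=1}^n\|_{Y(F)}\le\|T\|_{X;Y^{\rm dual}}\|(x_j)_{j=1}^n\|_{X(E)}\|(y_j)_{j=1}^n\|_{Y(F)}$. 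Taking the infimum over all representations of $u$ yields $|\Psi(T)(u)|\le\|T\|_{X;Y^{\rm dual}}\,\alpha_{X,Y}(u)$, hence $\Psi(T)\in(E\otimes_{\alpha_{X,Y}}F)^*$ with $\|\Psi(T)\|\le\|T\|_{X;Y^{\rm dual}}$.

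Next I would show $\Psi$ is a surjective isometry. Given $\varphi\in(E\otimes_{\alpha_{X,Y}}F)^*$, define $T_\varphi\colon E\to F^*$ by $(T_\varphi x)(y)=\varphi(x\otimes y)$; since $\alpha_{X,Y}$ is reasonable, $|\varphi(x\otimes y)|\le\|\varphi\|\|x\|\|y\|$, so $T_\varphi\in\mathcal{L}(E;F^*)$. The functional $\varphi_{T_\varphi}\colon E\otimes F^{**}\to\mathbb{K}$ extends $\varphi$ along $\mathrm{id}_E\otimes J_F$, so by Lemma \ref{1lema.ideal.maximal.qn}(b) (with $F^*$ in place of $F$) it suffices to prove that $\varphi_{T_\varphi}$ is continuous on $E\otimes_{\alpha_{X,Y}}F^{**}$ with $\|\varphi_{T_\varphi}\|\le\|\varphi\|$; this gives $T_\varphi\in\mathcal{L}_{X;Y^{\rm dual}}(E;F^*)$ and $\|T_\varphi\|_{X;Y^{\rm dual}}=\|\varphi_{T_\varphi}\|\le\|\varphi\|$. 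To estimate $|\varphi_{T_\varphi}(w)|=\bigl|\sum_{j=1}^n\xi_j(T_\varphi x_j)\bigr|$ for $w=\sum_{j=1}^n x_j\otimes\xi_j\in E\otimes F^{**}$, I invoke the principle of local reflexivity: for $\varepsilon>0$, with $G=\mathrm{span}\{\xi_1,\dots,\xi_n\}\subseteq F^{**}$ and $H=\mathrm{span}\{T_\varphi x_1,\dots,T_\varphi x_n\}\subseteq F^*$ there is a $(1+\varepsilon)$-isomorphism $R\colon G\to F$ with $\psi(Rg)=g(\psi)$ for all $g\in G$ and $\psi\in H$. Putting $y_j=R\xi_j$ we get $\xi_j(T_\varphi x_j)=(T_\varphi x_j)(y_j)=\varphi(x_j\otimes y_j)$, hence $|\varphi_{T_\varphi}(w)|=\bigl|\varphi\bigl(\sum_j x_j\otimes y_j\bigr)\bigr|\le\|\varphi\|\|(x_j)_{j=1}^n\|_{X(E)}\|(y_j)_{j=1}^n\|_{Y(F)}$, while $\|(y_j)_{j=1}^n\|_{Y(F)}=\|(R\xi_j)_{j=1}^n\|_{Y(F)}\le(1+\varepsilon)\|(\xi_j)_{j=1}^n\|_{Y(G)}$ by linear stability of $Y$. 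Working with representations $w\in M\otimes N$ in finite-dimensional subspaces (legitimate because $\alpha_{X,Y}$ is finitely generated by Proposition \ref{porpr}), comparing $\|(\xi_j)_{j=1}^n\|_{Y(G)}$ with $\|(\xi_j)_{j=1}^n\|_{Y(N)}$, and letting $\varepsilon\to0^+$, one obtains $|\varphi_{T_\varphi}(w)|\le\|\varphi\|\,\alpha_{X,Y}(w)$, i.e. $\|\varphi_{T_\varphi}\|\le\|\varphi\|$. Finally $\Psi(T_\varphi)=\varphi$ by construction, so $\Psi$ is onto; and applying the above to $\varphi=\Psi(T)$ gives $T_{\Psi(T)}=T$ and $\|T\|_{X;Y^{\rm dual}}\le\|\Psi(T)\|$, which combined with the contraction estimate shows that $\Psi$ is an isometric isomorphism.

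The step I expect to be the main obstacle is the local-reflexivity argument in the previous paragraph, precisely the comparison between the $Y$-norm of a finite sequence sitting in a finite-dimensional subspace $G$ of $F^{**}$ and its $Y$-norm in the ambient space: this is exactly what is needed to make the supremum defining $\|\cdot\|_{Y^{\rm dual}(F^*)}$ over $B_{Y(F^{**})}$ collapse, up to $\varepsilon$, onto the sub-supremum over $J_F(B_{Y(F)})$, and it is where the good behaviour of $Y$ under restriction to subspaces must be exploited. Everything else is routine manipulation of the definitions of $\alpha_{X,Y}$, $Y^{\rm dual}$ and $(X;Y^{\rm dual})$-summing operators, together with Lemma \ref{1lema.ideal.maximal.qn} and the finite generation of $\alpha_{X,Y}$.
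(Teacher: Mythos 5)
Your construction of $\Psi$ and the contraction estimate $\|\Psi(T)\|\le\|T\|_{X;Y^{\rm dual}}$ coincide with the paper's argument. The converse is where you diverge, and where your proposal has a genuine gap. Routing everything through $E\otimes_{\alpha_{X,Y}}F^{**}$, Lemma \ref{1lema.ideal.maximal.qn}(b) and the principle of local reflexivity forces you to use two tools that the hypotheses of Theorem \ref{Teo.prin.1} do not provide: first, the finite generation of $\alpha_{X,Y}$, which you quote from Proposition \ref{porpr}, requires $X$ and $Y$ to be linearly stable and finitely injective, and finite injectivity is only imposed in the last theorem of the section, not here; second, the comparison $\|(\xi_j)_{j=1}^n\|_{Y(G)}\le\|(\xi_j)_{j=1}^n\|_{Y(F^{**})}$ for the finite-dimensional subspace $G$ produced by local reflexivity is again precisely finite injectivity of $Y$ --- linear stability only yields the reverse inequality $\|(\xi_j)_{j=1}^n\|_{Y(F^{**})}\le\|(\xi_j)_{j=1}^n\|_{Y(G)}$, which is useless for your estimate. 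Since you yourself flag this comparison as the main obstacle and then write ``one obtains'' rather than proving it, the decisive bound $\|\varphi_{T_\varphi}\|\le\|\varphi\|$ is not actually established.

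The paper does not pass through $F^{**}$ at all. Given $(x_j)_{j=1}^\infty\in X(E)$ it estimates, for each $n$,
$$\left\|\left(T_\varphi(x_j)\right)_{j=1}^n\right\|_{Y^{\rm dual}(F^*)}=\sup_{(y_j)_{j=1}^\infty\in B_{Y(F)}}\left|\sum_{j=1}^n\varphi(x_j\otimes y_j)\right|\le\|\varphi\|\cdot\left\|(x_j)_{j=1}^n\right\|_{X(E)},$$
the inequality coming from $\alpha_{X,Y}\bigl(\sum_j x_j\otimes y_j\bigr)\le\|(x_j)_{j=1}^n\|_{X(E)}\cdot\|(y_j)_{j=1}^n\|_{Y(F)}$, and then invokes the finite determinedness of $X$ and of $Y^{\rm dual}$ to take the supremum over $n$; no local reflexivity, no finite generation, no appeal to Lemma \ref{1lema.ideal.maximal.qn}(b). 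Note that the point you correctly identified as the crux --- replacing the supremum over $B_{Y(F^{**})}$ in the definition of $\|\cdot\|_{Y^{\rm dual}(F^*)}$ by a supremum over $J_F(B_{Y(F)})$ --- is exactly what the displayed equality performs; the paper treats this as part of the duality framework for $Y^{\rm dual}$ rather than as something to be extracted from local reflexivity. To salvage your route you would have to either add finite injectivity of $X$ and $Y$ to the hypotheses or justify that identification directly; as written, the proof does not close.
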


\begin{proof} Given $T \in \mathcal{L}_{X;Y^{\rm dual}}(E;F^*)$, call $\Psi(T)$ the linearization of the bilinear form
$$(x,y) \in E \times F \mapsto T(x)(y) \in \mathbb{K}. $$
So, $\Psi(T)$ is a linear functional on $E \otimes F$ such that
\[\Psi\left( T\right) \left( \sum_{j=1}^{n}x_{j}\otimes y_{j}\right) = \sum_{j=1}^{n}T\left( x_{j}\right) \left( y_{j}\right).\]
To check its continuity with respect to $\alpha_{X,Y}$, note that for $u=\sum\limits_{j=1}^{n}x_{j}\otimes y_{j}\in E\otimes F$, denoting by $J_F \colon F \longrightarrow F^{**}$ the canonical embedding,
	\begin{align*}
	\left|\Psi(T)(u) \right|&\leq \sum_{j=1}^{n}\left|T(x_{j})(y_{j}) \right|= \sum_{j=1}^{n} \left| J_{F}(y_{j})(T(x_{j})) \right|\\
& \leq  \left\|T \right\|_{X;Y^{\rm dual}} \cdot \left\|(x_{j})_{j=1}^{n} \right\|_{X(E)}\cdot \left\|(J_F(y_{j}))_{j=1}^{n} \right\|_{Y(F^{**})}\\
	&\leq \left\|T \right\|_{X;Y^{\rm dual}} \cdot \left\|(x_{j})_{j=1}^{n} \right\|_{X(E)}\cdot \left\|(y_{j})_{j=1}^{n} \right\|_{Y(F)},
	\end{align*}
where the last inequality follows from the linear stability of $Y$. Since this estimate holds for every representation of $u$, it follows that $\left|\Psi(T)(u) \right|\leq \left\|T \right\|_{X;Y^{\rm dual}}\cdot \alpha_{X,Y}^{}(u) $. Hence, $\Psi(T) \in (E\otimes_{\alpha_{X,Y}^{}} F )^*$, proving that $\Psi \colon \mathcal{L}_{X;Y^{\rm dual}}(E;F^*)\longrightarrow (E\otimes_{\alpha_{X,Y}^{}} F )^*$ is a (obviously linear) bounded operator with $\left\|\Psi(T) \right\|\leq \left\|T \right\|_{X;Y^{\rm dual}}.  $ Recall that $\mathcal{L}_{X;Y^{\rm dual}}(E;F^*)$ is a Banach space because $Y$ is spherically complete.
	
Now it is enough to show that $\Psi$ is a surjective isometry. For $\varphi\in (E\otimes_{\alpha_{X,Y}^{}}F)^*$, the map
$$T_{\varphi}\colon E\longrightarrow F^*~,~T_{\varphi}(x)(y)= \varphi(x\otimes y),$$
is clearly a bounded linear operator. Given  $(x_{j})_{j=1}^{\infty} \in X(E)$, for every $n \in \mathbb{N}$,
	\begin{align*}
	\left\|\left( T_{\varphi}(x_{j})\right)_{j=1}^{n}  \right\|_{Y^{\rm dual}(F^*)} = \sup_{(y_{j})_{j=1}^{\infty}\in B_{Y(F)}}\left|\sum_{j=1}^{n}\varphi(x_{j}\otimes y_{j})\right|
	\leq \left\|\varphi \cdot \right\|\left\|(x_{j})_{j=1}^{n} \right\|_{X(E)}.
	\end{align*}
 Since $X$ and $Y^{\rm dual}$ are finitely determined, taking the supremum over $n$ it follows that $T_{\varphi}\in \mathcal{L}_{X;Y^{\rm dual}}(E;F^*)$ and
	$\left\|T_{\varphi} \right\|_{X;Y^{\rm dual}} \leq \left\|\varphi \right\|$. The easily checked equality $\Psi(T_{\varphi})=\varphi$ completes the proof. 	
\end{proof}

It is well known (see, e.g., \cite[Ex. 17.2]{A.Defant}) that  a normed operator ideal $\cal I$ is maximal if and only if there exists a finitely generated tensor norm $\alpha$ such that
\begin{equation}\label{99999}
{\cal I}(E;F)\stackrel{1}{=} (E \otimes_{\alpha}F^*)^*\cap \mathcal{L}(E,F)
\end{equation}
for all Banach $E$ and $F$. We finish the paper establishing conditions under which the tensor quasi-norm $\alpha_{X,Y}^{}$ satisfies \eqref{99999}. Therefore, according to the Propositions \ref{razoavel} and \ref{porpr}, we will assume that $X$ and $Y$ are linearly stable, monotone and finitely injective sequences classes and $\varepsilon \leq \alpha_{X,Y}$.

\begin{theorem} Suppose that $X$ and $Y$ are  finitely determined or finitely  dominated and that $Y$ is  spherically complete. Then, regardless of the Banach spaces $E$ and $F$,
	$$\mathcal{L}_{X;Y^{\rm dual}}(E;F)\stackrel{1}{=}(E\otimes_{\alpha_{X,Y}^{}}F^*)^*\cap \mathcal{L}(E;F).$$
\end{theorem}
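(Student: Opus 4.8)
The plan is to combine the two previous theorems of this section with the abstract characterization of maximality via finitely generated tensor norms. By Proposition \ref{razoavel} the map $\alpha_{X,Y}$ is a reasonable quasi-norm (we have assumed $X$ and $Y$ monotone and $\varepsilon\leq\alpha_{X,Y}$), and by Proposition \ref{porpr} it is in fact a tensor quasi-norm, since $X$ and $Y$ are also linearly stable and finitely injective. Thus all the hypotheses needed to invoke the results below are in force. The key observation is that Theorem \ref{Teo.prin.1}, applied with $F$ replaced by $F^*$, yields a canonical isometric identification
$$(E\otimes_{\alpha_{X,Y}}F^*)^*\stackrel{1}{=}\mathcal{L}_{X;Y^{\rm dual}}(E;F^{**}),$$
because $X$ and $Y$ are finitely determined (or finitely dominated — one still has to check the proof of Theorem \ref{Teo.prin.1} goes through in the finitely dominated case, or reduce to it) and $Y$ is spherically complete. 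So the right-hand side of the statement becomes $\mathcal{L}_{X;Y^{\rm dual}}(E;F^{**})\cap\mathcal{L}(E;F)$, where $\mathcal{L}(E;F)$ is viewed inside $\mathcal{L}(E;F^{**})$ via the canonical embedding $J_F\colon F\hookrightarrow F^{**}$.

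Next I would identify this intersection with $\mathcal{L}_{X;Y^{\rm dual}}(E;F)$. An operator $T\in\mathcal{L}(E;F)$ lies in the intersection precisely when $J_F\circ T\in\mathcal{L}_{X;Y^{\rm dual}}(E;F^{**})$. One direction is the ideal property: if $T\in\mathcal{L}_{X;Y^{\rm dual}}(E;F)$ then $J_F\circ T\in\mathcal{L}_{X;Y^{\rm dual}}(E;F^{**})$ with $\|J_F\circ T\|_{X;Y^{\rm dual}}\leq\|T\|_{X;Y^{\rm dual}}$. For the converse and the reverse norm inequality one uses that $J_F$ is a metric injection together with the finite injectivity and linear stability of $Y^{\rm dual}$: for $(x_j)_{j=1}^\infty\in X(E)$ and each $n$, finite injectivity of $Y^{\rm dual}$ gives $\|(T(x_j))_{j=1}^n\|_{Y^{\rm dual}(F)}=\|(J_F(T(x_j)))_{j=1}^n\|_{Y^{\rm dual}(F^{**})}$, and since $Y^{\rm dual}$ is finitely determined, taking the supremum over $n$ shows $(T(x_j))_{j=1}^\infty\in Y^{\rm dual}(F)$ with $\|T\|_{X;Y^{\rm dual}}=\|J_F\circ T\|_{X;Y^{\rm dual}}$. (Here I use that $Y^{\rm dual}$ is always finitely determined and linearly stable and spherically complete, as recorded in the excerpt; its finite injectivity should follow from that of $Y$, or can be argued directly from the definition of the dual norm.) This chain of isometric identifications gives $\mathcal{L}_{X;Y^{\rm dual}}(E;F)\stackrel{1}{=}(E\otimes_{\alpha_{X,Y}}F^*)^*\cap\mathcal{L}(E;F)$.

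Alternatively — and this is probably the cleaner route — I would appeal directly to Theorem \ref{maxim}: it already establishes that $\mathcal{L}_{X;Y^{\rm dual}}$ is a maximal Banach operator ideal, and by the cited fact (\cite[Ex. 17.2]{A.Defant}) maximality is \emph{equivalent} to the existence of \emph{some} finitely generated tensor norm satisfying \eqref{99999}; what remains is to verify that $\alpha_{X,Y}$ itself is the tensor norm that works. For this one combines Lemma \ref{1lema.ideal.maximal.qn}, which says $T\mapsto\varphi_T$ identifies $\mathcal{L}_{X;Y^{\rm dual}}(E;F)$ isometrically with the continuous functionals on $E\otimes_{\alpha_{X,Y}}F^*$ that arise from operators $E\to F$ (equivalently, the functionals in $(E\otimes_{\alpha_{X,Y}}F^*)^*$ whose associated operator $E\to F^{**}$ actually maps into $F$), with Theorem \ref{Teo.prin.1} identifying \emph{all} of $(E\otimes_{\alpha_{X,Y}}F^*)^*$ with $\mathcal{L}_{X;Y^{\rm dual}}(E;F^{**})$; intersecting with $\mathcal{L}(E;F)$ on the operator side corresponds exactly to restricting to $\mathcal{L}_{X;Y^{\rm dual}}(E;F)$ by the metric-injection argument above.

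The main obstacle I anticipate is purely bookkeeping of the isometry: making sure that the identification coming from Theorem \ref{Teo.prin.1} (with $F^*$ in place of $F$) and the identification $\varphi_T\leftrightarrow T$ of Lemma \ref{1lema.ideal.maximal.qn} are the \emph{same} map, so that "$\cap\,\mathcal{L}(E;F)$" on the functional side translates to precisely the operators landing in $F$ rather than $F^{**}$, with no loss in norm in either direction. The finitely dominated case requires checking that Theorems \ref{dualB}–\ref{Teo.prin.1} and the finite-injectivity transfer to $Y^{\rm dual}$ all survive that hypothesis; this should be routine given the structure of finitely dominated classes (a closed subspace of a finitely determined class cut out by a vanishing-tail condition), but it is the place where a careful argument is needed rather than a one-line citation.
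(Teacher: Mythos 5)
Your proposal is correct, but both of your routes differ from the paper's argument. The paper, after the easy direction (boundedness of $T\mapsto\varphi_T$ into the intersection), takes $\varphi$ in the intersection, forms $T_\varphi\colon E\to J_F(F)\subseteq F^{**}$, and \emph{localizes}: for each $M\in\mathcal{F}(E)$ and $L\in\mathcal{CF}(F)$ it applies Theorem \ref{Teo.prin.1} to $M\otimes_{\alpha_{X,Y}}(F/L)^*$ --- where $(F/L)^{**}=F/L$, so the bidual bookkeeping you worry about never arises --- identifies the restriction of $\varphi$ with $Q_L\circ T_\varphi\circ I_M$, obtains $\|Q_L\circ T_\varphi\circ I_M\|_{X;Y^{\rm dual}}\le\|\varphi\|$ uniformly, and then glues via the supremum characterization of Theorem \ref{maxim}. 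Your first route replaces this by a global application of Theorem \ref{Teo.prin.1} to $E\otimes F^*$ followed by a descent from $Y^{\rm dual}(F^{**})$ to $Y^{\rm dual}(F)$; that descent is valid, but the clean justification is not a generic finite injectivity of $Y^{\rm dual}$ (nowhere established) but the specific observation that for $(\psi_j)_{j=1}^n\in B_{Y(F^*)}$ the sequence $(J_{F^*}(\psi_j))_{j=1}^n$ lies in $B_{Y(F^{***})}$ by linear stability of $Y$ and satisfies $J_{F^*}(\psi_j)(J_F(y_j))=\psi_j(y_j)$, whence $\|(y_j)_{j=1}^n\|_{Y^{\rm dual}(F)}=\|(J_F(y_j))_{j=1}^n\|_{Y^{\rm dual}(F^{**})}$. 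Your second route is actually the shortest of all: once the intersection is read, as is standard, as the set of $T\in\mathcal{L}(E;F)$ with $\varphi_T\in(E\otimes_{\alpha_{X,Y}}F^*)^*$, Lemma \ref{1lema.ideal.maximal.qn}(b) alone delivers both inclusions and the isometry, with no need of the maximality equivalence from \cite[Ex.\ 17.2]{A.Defant}. What the paper's localization buys is that it stays in $F$ (never $F^{**}$) and runs through Theorem \ref{maxim}, which is stated for the full ``finitely determined or finitely dominated'' hypothesis; what your global route buys is brevity, at the cost of the two verifications you rightly flag --- and note that the finitely dominated case of Theorem \ref{Teo.prin.1} is a point the paper itself glosses over, since its own proof also cites that theorem, which is stated only for finitely determined classes.
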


\begin{proof} The assumptions guarantee that $\mathcal{L}_{X;Y^{\rm dual}}(E;F)$ is a Banach space. The same arguments used before show that $\Phi \colon \mathcal{L}_{X;Y^{\rm dual}}(E,F)\longrightarrow (E\otimes_{\alpha_{X,Y}}F^*)^*\cap \mathcal{L}(E,F)$
	given by
	\[\Phi(T)\left(\sum_{j=1}^{n}x_{j}\otimes \psi_{j} \right) = \sum_{j=1}^{n} \psi_{j}(T(x_{j})),\]
is a well defined linear operator. For $u=\sum\limits_{j=1}^{n}x_{j}\otimes \psi_{j}\in E\otimes F^*$,
	\begin{align*}
	\left|\Phi(T)(u) \right| \leq \sum_{j=1}^{n}\left|\psi_{j}(T(x_{j})) \right|  \leq \left\|T \right\|_{X;Y^{\rm dual}}\cdot \left\|(x_{j})_{j=1}^{n} \right\|_{X(E)} \cdot \left\|(\psi_{j})_{j=1}^{n} \right\|_{Y(F^*)}.
	\end{align*}
Again, since this holds for any representation of $u$ we have  	$
	\left|\Phi(T)(u) \right|\leq \left\|T \right\|_{X;Y^{\rm dual}} \cdot \alpha_{X,Y}^{}(u)
	$, which proves, in particular, that $\Phi(T)$ is continuous and $\left\|\Phi(T) \right\|\leq \left\| T\right\|_{X;Y^{\rm dual}}.$
	
Once again, it is enough to show that $\Phi$ is a surjective isometry. To do so, given $\varphi\in (E\otimes_{\alpha_{X,Y}^{}}F^*)^*\cap  \mathcal{L}(E,F)$ consider the continuous linear operator $T_{\varphi} \colon E\longrightarrow J_{F}(F) \subseteq F^{**}$ given by $T_{\varphi}(x)(\psi)= \varphi(x\otimes \psi)$. 	Let $M\in \mathcal{F}(E)$ and $L\in \mathcal{CF}(F)$ be given. Considering the isometric isomorphism $Q_{L}^* \colon (F/L)^{*}\longrightarrow L^{\perp}$, Proposition \ref{Teo.prin.1} gives
	\begin{equation}\label{eq.iso.iso}
	\left(M\otimes_{\alpha_{X,Y}^{}}\left(F/L\right)^* \right)^* \stackrel{1}{=} \mathcal{L}_{X;Y^{\rm dual}}\left(M; \left(F/L \right)^{**}  \right) \stackrel{1}{=} \mathcal{L}_{X;Y^{\rm dual}}\left(M;F/L \right).
	\end{equation}
Considering the composition
$$M \otimes_{\alpha_{X;Y}}  (F/L)^* \stackrel{Id_M \otimes Q_L^*}{\xrightarrow{\hspace*{2cm}}} M \otimes_{\alpha_{X;Y}}L^\perp \stackrel{\varphi|_{M \otimes_{\alpha_{X;Y}}L^\perp }}{\xrightarrow{\hspace*{2cm}}} \mathbb{K}, $$
we have $\varphi|_{M \otimes_{\alpha_{X;Y}}L^\perp }\circ (Id_M \otimes Q_L^*) \in \left(M\otimes_{\alpha_{X,Y}^{}}\left(F/L\right)^* \right)^*$. By (\ref{eq.iso.iso}) there is a unique $T \in \mathcal{L}_{X;Y^{\rm dual}}\left(M;F/L \right)$ such that $\Psi(T) = \varphi|_{M \otimes_{\alpha_{X;Y}}L^\perp }\circ (Id_M \otimes Q_L^*)$, where $\Psi$ is the isomorphism constructed in the proof of Proposition \ref{Teo.prin.1}. 	For every tensor  $\sum\limits_{j=1}^{n}x_{j}\otimes \psi_{j}\in M\otimes L^{\perp}$, we have $\sum\limits_{j=1}^{n}x_{j}\otimes (Q^*_{L})^{-1}(\psi_{j})\in M\otimes \left( F/L\right)^* $, so
\begin{align*}\Psi(T)& \left(\sum_{j=1}^{n}x_{j}\otimes (Q_{L}^*)^{-1}(\psi_{j}) \right) =	\varphi|_{M \otimes_{\alpha_{X;Y}}L^\perp }\circ (Id_{M}\otimes Q_{L}^*)\left(\sum_{j=1}^{n}x_{j}\otimes (Q_{L}^*)^{-1}(\psi_{j}) \right)\\
 &= \varphi\left(\sum_{j=1}^{n}x_{j}\otimes \psi_{j} \right)=   \sum_{j=1}^{n} T_{\varphi}(x_{j})(\psi_{j})  = \sum_{j=1}^{n}\psi_{j}(T_{\varphi}\circ Id_{M})(x_{j})\\
  &= \sum_{j=1}^{n}(T_{\varphi}\circ Id_{M})^*(\psi_{j})(x_{j})= \sum_{j=1}^{n}(Id_{M}^*\circ T_{\varphi}^*)(\psi_{j})(x_{j})\\
	&= \sum_{j=1}^{n}(Id_{M}^*\circ T_{\varphi}^*\circ Q_{L}^*)\left((Q_{L}^*)^{-1}(\psi_{j})\right)(x_{j})= \sum_{j=1}^{n}(Q_{L}^*)^{-1}(\psi_{j})(Q_{L}\circ T_{\varphi}\circ Id_{M})(x_{j})\\
& = \Psi(Q_L \circ T_\varphi \circ Id_M)\left(\sum_{j=1}^{n}x_{j}\otimes (Q_{L}^*)^{-1}(\psi_{j})  \right).
\end{align*}
 The injectivity of $\Psi$ gives $T= Q_{L}\circ T_{\varphi}\circ I_{M}$, and the fact that it is an isometry yields
\begin{align*}
	\left\|Q_{L}\circ T_{\varphi}\circ I_{M} \right\|_{X;Y^{\rm dual}}& = \|\Psi^{-1}(T)\|= \left\|\varphi|_{M \otimes_{\alpha_{X;Y}}L^\perp }\circ (Id_M \otimes Q_L^*)\right\|\\
 &\leq \left\| \varphi\right\|\cdot \left\| I_{M}\otimes Q_{L}^*\right\| \leq  \left\| \varphi\right\|.
\end{align*}
It follows from Theorem \ref{maxim} that $T_{\varphi}\in \mathcal{L}_{X;Y^{\rm dual}}(E;F)$ and
$$\left\|T_{\varphi} \right\|_{X;Y^{\rm dual}}\leq \sup_{M,L}\left\|Q_{L}\circ T_{\varphi}\circ I_{M} \right\|_{X;Y^{\rm dual}} \leq \left\|\varphi \right\|.$$ Finally, it is not difficult to see that $\Phi(T_{\varphi})=\varphi$.
\end{proof}

Plenty of concrete cases for which the theorem above applies can be provided just bearing in mind all that was said about the sequence classes listed in Example \ref{exsec}.

\bigskip

\noindent Faculdade de Matem\'atica\\
Universidade Federal de Uberl\^andia\\
38.400-902 -- Uberl\^andia -- Brazil\\
e-mail: botelho@ufu.br\\

\noindent Departamento de Ci\^{e}ncias Exatas\\
Universidade Federal da Para\'iba\\
58.297-000 -- Rio Tinto -- Brazil\\
\hspace*{1,7cm} and

\noindent Departamento de Matem\'atica\\
Universidade Federal da Para\'iba\\
58.051-900 -- Jo\~ao Pessoa -- Brazil

\noindent e-mails: jamilson@dcx.ufpb.br \, and/or \, jamilsonrc@gmail.com\\

\noindent Departamento de Matem\'atica\\
Universidade Federal da Para\'iba\\
58.051-900 -- Jo\~ao Pessoa -- Brazil\\
e-mail: llucascarvalho23@yahoo.com.br

\end{document}